\newtheorem{theorem}{Theorem}[section]
\newtheorem{corollary}[theorem]{Corollary}
\newtheorem{lemma}[theorem]{Lemma}
\newtheorem{prop}[theorem]{Proposition}
\theoremstyle{definition}
\newtheorem{defin}[theorem]{Definition}
\newtheorem{example}[theorem]{Example}
\newtheorem{rmk}[theorem]{Remark}
\newtheorem{obs}[theorem]{Observation}
\newcommand{\pf}[1]{\text{PF}_{#1}}
\newcommand{\upf}[1]{\text{UPF}_{#1}}
\newcommand{\rupf}[2]{\text{UPF}^{#2}_{#1}}
\newcommand{\fr}[1]{\text{FR}_{#1}}
\newcommand{\rfr}[2]{\text{FR}^{#2}_{#1}}
\newcommand{\rfb}[2]{\text{Fb}^{#2}_{#1}}
\newcommand{\fb}[1]{\text{Fb}_{#1}}
\newcommand{\leqnomode}{\tagsleft@true}
\newcommand{\reqnomode}{\tagsleft@false}
\title{
Unit interval parking functions and the $r$-Fubini numbers
}
 \author[Bradt]{S. Alex Bradt}
 \address[S.~A.~Bradt]{School of Mathematical and Statistical Sciences, Arizona State University, Tempe, AZ 85281}
\email{\textcolor{blue}{\href{mailto:sbradt@asu.edu}{sbradt@asu.edu}}}
 \author[Elder]{Jennifer Elder}
 \address[J. Elder]{Department of Computer Science, Math \& Physics, Missouri Western State University, Saint Joseph, MO 64501 }
\email{\textcolor{blue}{\href{mailto:jelder8@missouriwestern.edu}{jelder8@missouriwestern.edu}}}
 \author[Harris]{Pamela E. Harris}
 \address[P.~E. Harris]{Department of Mathematical Sciences, University of Wisconsin-Milwaukee, Milwaukee, WI 53211}
 \email{\textcolor{blue}{\href{mailto:peharris@uwm.edu}{peharris@uwm.edu}}}
 \thanks{P.~E.~Harris was supported through a Karen Uhlenbeck EDGE Fellowship.}
 \thanks{This material is based upon work supported by the National Science Foundation under Grant No. DMS-1929284 while the authors were in residence at the Institute for Computational and Experimental Research in Mathematics in Providence, RI% , during the Summer@ICERM 2022: Computational Combinatorics program.
 }
 \author[Rojas Kirby]{Gordon Rojas Kirby}
 \address[G.~Rojas Kirby]{Department of Mathematics and Statistics, San Diego State University, San Diego, CA 92182}
\email{\textcolor{blue}{\href{mailto:gkirby@sdsu.edu}{gkirby@sdsu.edu}}}
 \author[Reutercrona]{Eva Reutercrona}
 \address[E.~Reutercrona]{Pacific Lutheran University, Tacoma, WA 98447}
\email{\textcolor{blue}{\href{mailto:ereutercrona@plu.edu}{ereutercrona@plu.edu}}}
 \author[Wang]{Yuxuan (Susan) Wang}
 \address[Y.~Wang]{Mount Holyoke College, South Hadley, MA 01075}
\email{\textcolor{blue}{\href{mailto:}{wang264y@mtholyoke.edu}}}
 \author[Whidden]{Juliet Whidden}
 \address[J. Whidden]{Vassar College, Poughkeepsie, NY 12604}
\email{\textcolor{blue}{\href{mailto:jwhidden@vassar.edu}{jwhidden@vassar.edu}}}
\begin{document}
%\doublespacing

\begin{abstract}
We recall that unit interval parking functions of length $n$ are a subset of parking functions in which every car parks in its preference or in the spot after its preference, and Fubini rankings of length $n$ are rankings of $n$ competitors allowing for ties. 
We present an independent proof of a result of Hadaway, which establishes that unit interval parking functions and Fubini rankings are in bijection. 
We also prove that the cardinality of these sets are given by Fubini numbers. 
In addition, we give a complete characterization 
of unit interval parking functions by determining when a rearrangement of a unit interval parking function is again a unit interval parking function. 
This yields an identity for the Fubini numbers as a sum of multinomials over compositions. 
Moreover, we introduce a generalization of Fubini rankings, which we call the $r$-Fubini rankings of length $n+r$. 
We show that this set is in bijection with unit interval parking functions of length $n+r$ where the first $r$ cars have distinct preferences. 
We conclude by establishing that these sets are enumerated by the $r$-Fubini numbers. 
\end{abstract}

\subjclass{Primary: 05A05; Secondary 05A19}
\keywords{ennumerative combinatorics, parking function, fubini number, fubini ranking}

\maketitle

% \tableofcontents

\section{Introduction}\label{sec:upf_intro}

Throughout we let $n\in\mathbb{N}=\{1,2,3,\ldots\}$ and $[n]=\{1,2,\ldots,n\}$.
In this work, we are concerned with studying and enumerating families of $n$-tuples in $[n]^n$ satisfying certain properties and providing bijections between them. Our main objects of study are unit interval parking functions and Fubini rankings. We begin by defining each as subsets of $[n]^n$ and recalling  some known results from the literature. 

Parking functions, which were introduced by Konheim and Weiss \cite{Konheim_Weiss} in the context of hashing problems, can be defined as $n$-tuples $\alpha=(a_1,\dots,a_n)\in [n]^n$ such that at least $i$ entries of $\alpha$ are at most $i$, for all $i\in[n]$. 
We denote the set of parking functions of length $n$ by $\pf{n}$, and it is well-known that $|\pf{n}|=(n+1)^{n-1}$, \cite{RiordanParking}.  

We consider a special subset of parking functions called unit interval parking functions, as defined by Hadaway in \cite{Hadaway_unit_interval}.
In this case, one considers a queue of $n$ cars entering a one-way street with $n$ parking spots labeled increasingly from $1$ to $n$. The preferences of the $n$ cars are provided in a preference list $\alpha=(a_1,a_2,\ldots,a_n)\in[n]^n$, where car $i$ prefers parking spot $a_i$, for all $i\in[n]$. 
Car $i$ drives to its preference $a_i$ and if that parking spot is available it parks there. 
Otherwise, it attempts to park in spot $a_i+1$. If that spot is available, then parking is successful. 
Otherwise, car $i$ fails to park.
If all cars are able to park using this unit interval parking rule, then we say that $\alpha$ is a \emph{unit interval parking function} of length $n$. 
For example, $(1,1,2)$ is a unit interval parking function while the rearrangement $(2,1,1)$ is not.
We let $\upf{n}$ denote the set of unit interval parking functions of length~$n$.

Now consider the set of possible rankings for how $n$ competitors can rank in a competition allowing ties. In this setting, no ranks can be skipped, whenever two competitors tie for a rank they ``cover'' that rank and the next, and similar whenever more than two competitors tie they cover. For example, $(2,3,5,3,1,5)$ is one such ranking, while $(1,1,4,4,5,6)$ is not.
These rankings were studied by Gross \cite{Gross_preferential_arrangements}, who showed that the number of such rankings is given by a Fubini number\footnote{The numbers  were named after Fubini by Comtet in \cite{Comtet_Fubini}}.
We recall that the Fubini numbers, also known as the ordered Bell numbers\cite[\href{https://oeis.org/A000670}{A000670}]{OEIS}, are defined by
\begin{align}
    \fb{n}=\sum _{{k=0}}^{n}\sum _{{j=0}}^{k}(-1)^{{k-j}}{\binom  {k}{j}}j^{n}.\label{fubini numbers}
\end{align}
Hadaway called these rankings ``Fubini rankings'' and established a bijection between them and unit interval parking functions \cite[Theorem~5.12]{Hadaway_unit_interval}. Her bijection depended on the content of a parking function, i.e.~the multiset describing the values appearing in the tuple.

Motivated by the results of Hadaway \cite{Hadaway_unit_interval}, in this work we give a direct bijection between unit interval parking functions and Fubini rankings, which highlights their ``block structure'' (Definition \ref{blocks}). 
The block structure of these combinatorial objects plays a key role in our answer of Hadaway's question of which rearrangements of a unit interval parking function is again a unit interval parking function. Moreover, this helped us uncover a new formula for the Fubini numbers. The block structure also allows us to fully generalize the result to a new family of combinatorial objects which we call $r$-Fubini rankings.

This article is organized as follows.
In Section~\ref{sec:preliminaries}, we study unit interval parking functions and we:
\begin{enumerate}
\item Give a complete characterization of unit interval parking functions by proving which rearrangements of unit interval parking functions are again unit interval parking functions (Theorem~\ref{thm:upf_rearrangement}).
    \item Establish a bijection between the set of unit interval parking functions of length $n$ and Fubini rankings of length $n$ (Theorem~\ref{thm:bijection upf and fr}). This implies immediately unit interval parking functions of length $n$ are enumerated by the Fubini numbers, Equation \eqref{fubini numbers}.
    As a consequence, we give a new formula for the Fubini numbers (Corollary~\ref{coro:new_fubini_formula}):
    \[\fb{n}=\displaystyle\sum_{k=1}^{n}\left(\sum_{\textbf{c}=(c_1,c_2,\ldots,c_k)\models n}\binom{n}{c_1,c_2,\ldots,c_k}\right), \]
    where $\textbf{c}=(c_1,c_2,\ldots,c_k)\models n$ denotes that $\textbf{c}$ is a composition of $n$ with $k$ parts.
    \end{enumerate}
In Section~\ref{sec:r fubini},
    we introduce the $r$-Fubini rankings of length $n+r$ (Definition~\ref{defin:r_Fubini_rankings}). Our main results  establish that they are enumerated by the $r$-Fubini numbers. We recall  
    that the 
$r$-Stirling numbers of second kind\footnote{The $2$-Stirling numbers of the second kind are \cite[\href{https://oeis.org/A143494}{A143494}]{OEIS}.} are defined by
\[\left\{\begin{matrix}n+r\\k+r\end{matrix}\right\}_r = \frac{1}{(k-r)!}\sum_{i=0}^{k-r} (-1)^{k+i+r}\binom{k-r}{i}(i+r)^{n-r},\]
and the $r$-Fubini numbers\footnote{Note that the $1$-Fubini numbers are the Fubini numbers defined in \eqref{fubini numbers}. } 
are a
generalization of the Fubini numbers defined in \cite{carlitz_1980,rFubini}
by 
\[\rfb{n}{r}=\sum_{k=0}^n (k+r)!\left\{\begin{matrix}n+r\\k+r\end{matrix}\right\}_r.\]
\begin{enumerate}\setcounter{enumi}{2}
    \item We establish that $r$-Fubini rankings are enumerated by the $r$-Fubini numbers (Theorem~\ref{thm:unit_interval_r_fub}) by giving a bijection between $r$-Fubini rankings and unit interval parking functions starting with $r$ distinct values (Theorem~\ref{thm:r-fubini_to_r-distinct_uipf}). 
\end{enumerate}
We end in Section~\ref{sec: open} detailing some directions for future research.

\section{Bijection between unit interval parking functions and Fubini rankings}\label{sec:preliminaries}

In this section, we establish the bijection between $\upf{n}$ and the set of Fubini rankings of length $n$, denoted by $\fr{n}$ (Theorem~\ref{thm:bijection upf and fr}). We remark that this result was originally established by Hadaway in \cite{Hadaway_unit_interval}. 
We begin by giving a formal definition of Fubini rankings.

\begin{defin}\label{defin:Fubini_rankings}
The $n$-tuple $\beta = (b_1,b_2, \ldots, b_n) \in [n]^n$ is a \emph{Fubini ranking} of length $n$ if the following holds:
For all $x \in [n]$, if $k>0$ entries of $\beta$ are equal to $x$, then the next largest value in $\beta$ is $x+k$.
Denote the set of Fubini rankings of length $n$ by $\fr{n}$.
\end{defin}

Note that $(1,2,3)$ is a Fubini ranking, while $(1,3,3)$ is not. 
The following result is due to Cayley~\cite{cayley_2009}. 

\begin{prop}
For $n\geq 1$, $|\fr{n}|=\fb{n}$. 
\end{prop}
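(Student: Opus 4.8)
The plan is to exhibit a bijection between $\fr{n}$ and the set of ordered set partitions of $[n]$ (equivalently, weak orderings of $n$ competitors), which is classically enumerated by $\fb{n}$, and then to reconcile this combinatorial count with the explicit formula in \eqref{fubini numbers}. Given a Fubini ranking $\beta = (b_1,\ldots,b_n) \in \fr{n}$, let $v_1 < v_2 < \cdots < v_m$ be its distinct values and set $B_j = \{\, i \in [n] : b_i = v_j \,\}$. The forward map sends $\beta$ to the ordered set partition $\Phi(\beta) = (B_1, B_2, \ldots, B_m)$ of $[n]$.

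For the inverse, given an ordered set partition $(B_1,\ldots,B_m)$ of $[n]$, I would assign to every position in $B_j$ the value $1 + \sum_{i=1}^{j-1}|B_i|$; this produces an $n$-tuple $\Psi(B_1,\ldots,B_m)$. First I would check that $\Psi$ lands in $\fr{n}$: the distinct values used are $w_j = 1 + \sum_{i<j}|B_i|$, and since exactly $|B_j| = w_{j+1}-w_j$ positions receive $w_j$, the Fubini condition ``if $k$ entries equal $x$ then the next largest value is $x+k$'' holds at each present value.

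The key step is verifying that $\Phi$ and $\Psi$ are mutually inverse, which reduces to the claim that the defining condition of a Fubini ranking is equivalent to the assertion that its distinct values are precisely the partial sums $v_1 = 1$ and $v_{j+1} = v_j + |B_j|$. This is exactly where the Fubini condition does its work: it forces consecutive present values to differ by the multiplicity of the smaller, and a short induction shows this pins the smallest value to $1$, so that the values exhaust the ranks without gaps. I expect this equivalence -- and in particular confirming that no ``gap'' can appear and that the minimum value must be $1$ -- to be the main obstacle, since it is the only place the precise wording of Definition~\ref{defin:Fubini_rankings} is used.

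Finally, I would count ordered set partitions directly. Grouping by the number of blocks $k$, the number of ordered set partitions of $[n]$ into $k$ nonempty blocks is $k!\,S(n,k)$, where $S(n,k)$ is the Stirling number of the second kind, so $|\fr{n}| = \sum_{k=0}^{n} k!\,S(n,k)$. To match \eqref{fubini numbers} it then suffices to invoke the inclusion--exclusion identity $k!\,S(n,k) = \sum_{j=0}^{k}(-1)^{k-j}\binom{k}{j}j^n$ (which counts surjections $[n] \twoheadrightarrow [k]$); substituting this in yields exactly $\fb{n} = \sum_{k=0}^{n}\sum_{j=0}^{k}(-1)^{k-j}\binom{k}{j}j^n$.
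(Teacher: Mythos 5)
Your proof is correct, but it takes a genuinely different route from the paper for the simple reason that the paper gives no proof of this proposition at all: it is stated with an attribution to Cayley (the enumeration of rankings with ties also traces to Gross, cited in the introduction). Your argument---sending $\beta\in\fr{n}$ to the ordered set partition $(B_1,\ldots,B_m)$ of positions grouped by value, inverting via the partial-sum values $w_j=1+\sum_{i<j}|B_i|$, and then counting ordered set partitions as $\sum_{k} k!\,S(n,k)$ together with the surjection identity $k!\,S(n,k)=\sum_{j=0}^{k}(-1)^{k-j}\binom{k}{j}j^n$---is the standard self-contained justification that a reader would otherwise have to reconstruct from the citation, and it has the side benefit of foreshadowing the block structure (Definition~\ref{blocks}) that the paper exploits later: your blocks $B_j$ are exactly the index sets $I(x_j)$ used in the proof of Theorem~\ref{thm:bijection upf and fr}. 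One caveat deserves attention: your ``short induction'' pinning the smallest value to $1$ does not follow from the displayed condition of Definition~\ref{defin:Fubini_rankings} under its most literal reading, since for the largest present value the clause about ``the next largest value'' is vacuous; a tuple such as $(2,2)$ or $(2,2,2)$ then satisfies the stated condition while containing no entry equal to $1$. You must take it as part of the definition that rank $1$ is always earned---as the informal description of rankings in the introduction intends, and as the paper itself silently assumes in the proof of Lemma~\ref{lem:FRsArePFs}, where it asserts $d_1=\cdots=d_j=1$. With that reading fixed, your equivalence between the Fubini condition and the partial-sum description of the value set is exactly right, both compositions $\Phi\circ\Psi$ and $\Psi\circ\Phi$ are the identity as you claim, and the final count matches Equation~\eqref{fubini numbers} term by term.
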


We remark that any rearrangement of a Fubini ranking is again a Fubini ranking, as a Fubini ranking only depends on the ranks and not who places in which rank. Recall that this is also true for parking functions as any rearrangements of a parking function is also a parking function. 
However, this is not true for unit interval parking functions. For example, $(1,1,2)$ is a unit interval parking function while $(1,2,1)$ is not. We soon address the question of when a rearrangement of a unit interval parking function is again a unit interval parking function (Theorem \ref{thm:upf_rearrangement}). Before doing so, 
we consider Fubini rankings as parking preferences and establish that Fubini rankings are always parking functions. 

\begin{lemma}\label{lem:FRsArePFs}
For all $n\geq 1$, $\fr{n} \subseteq \pf{n}$.
\end{lemma}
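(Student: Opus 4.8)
The plan is to verify the defining inequality of a parking function directly from the ``block structure'' of the value multiset of a Fubini ranking. First I would recall that $\alpha \in \pf{n}$ precisely when, for every $i \in [n]$, at least $i$ of its entries are at most $i$, and I would note that this condition depends only on the multiset of values appearing in $\alpha$. Consequently, I may record a given $\beta \in \fr{n}$ by its distinct values $x_1 < x_2 < \cdots < x_m$ together with their multiplicities $k_1, k_2, \ldots, k_m$, where each $k_j > 0$ and $\sum_{j=1}^m k_j = n$. The whole proof then reduces to understanding how these $x_j$ and $k_j$ are related.

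The key structural input comes straight from Definition~\ref{defin:Fubini_rankings}: applying the condition to $x = x_j$, which occurs $k_j > 0$ times, shows that the next largest value present is $x_{j+1} = x_j + k_j$ for each $j < m$. Since a Fubini ranking records a ranking in which no rank is skipped, rank $1$ must be attained, so $x_1 = 1$. Telescoping the recurrence then gives the closed form
\[
x_j = 1 + \sum_{l=1}^{j-1} k_l \qquad (j \in [m]).
\]
I would also observe that $x_m + k_m = 1 + \sum_{l=1}^{m} k_l = n+1$, so setting $x_{m+1} := n+1$ extends the recurrence $x_{j+1} = x_j + k_j$ to every $j \in [m]$; this small normalization lets me handle the top block without a separate case. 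With this in hand the count is immediate: fixing $i \in [n]$ and letting $p$ be the largest index with $x_p \le i$, the number of entries of $\beta$ that are at most $i$ is exactly $\sum_{j=1}^{p} k_j = x_{p+1} - x_1 = x_{p+1} - 1$, and since $x_{p+1} > i$ forces $x_{p+1} \ge i+1$, this is $\ge i$. Thus at least $i$ entries are at most $i$ for every $i$, so $\beta \in \pf{n}$.

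The only delicate point I anticipate is pinning down the structural description of the value multiset, and in particular the normalization $x_1 = 1$: this is precisely what rules out degenerate tuples such as $(2,2)$ and guarantees a parked spot for the smallest preference, so the argument genuinely needs the ``no skipped ranks'' feature of a Fubini ranking and not merely the recurrence between consecutive present values. Once $x_1 = 1$ and $x_{j+1} = x_j + k_j$ are established, the parking inequality collapses to the one-line telescoping count above, so the bulk of the care goes into the structural lemma rather than the final estimate.
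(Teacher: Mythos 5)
Your proof is correct and follows essentially the same route as the paper's: both extract the structure $x_1=1$ and $x_{j+1}=x_j+k_j$ of the distinct values of a Fubini ranking and use it to verify the parking-function criterion, the paper by checking $d_i\leq i$ entrywise along the weakly increasing rearrangement $\beta^\uparrow$, you by the equivalent telescoping count of entries at most $i$. Your version is in fact slightly more careful than the paper's ``iterating this process'' step, since you explicitly isolate the normalization $x_1=1$ (ruling out tuples like $(2,2)$) and handle the top block with the sentinel $x_{m+1}=n+1$, but the underlying argument is the same.
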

\begin{proof}
    Suppose that $\beta=(b_1, \dots, b_n)$ is a Fubini ranking. Then by our previous remark we know that any rearrangement is also a Fubini ranking. Thus consider the weakly increasing rearrangement of $\beta$ which we call $\beta^\uparrow =(d_1,d_2,\ldots,d_n)$ which satisfies that $d_i\leq d_{i+1}$ for all $i\in[n-1]$.
    To show that $\beta\in \pf{n}$ it suffices to check that $d_i\leq i$ for all $i\in[n]$.
By Definition \ref{defin:Fubini_rankings},
there exists $j\in[n]$ values $d_1=d_2=\ldots=d_j=1$, thus, values $d_i\leq i$ for all $i\in[j]$.
Then, again by definition, the next entry is $d_{j+1}=j+1$, which satisfies $d_{j+1}\leq j+1$. Iterating this process, for $k> j+1$ if there are ties then $d_k< k$ or there is not tie and the value at $d_k$ is a new rank, hence $d_{k}=k\leq k$. Thus, $d_i\leq i$, for all $i\in[n]$,  and $\beta\in\pf{n}$.
\end{proof}

\begin{obs}\label{obs:FRBlockDoNotInteract}
Consider $\beta=(b_1,\dots, b_n)\in\fr{n}\subseteq [n]^n$ as a parking preference. Observe that cars at indices corresponding to a repeated value $j$ in $\beta$ do not interact with any car $c_i$ such that $b_i\neq j$. Thus, if $c_i$ has parking preference $j$, i.e. $b_i=j$, then $c_i$ parks in spot $j+k$, 
where $k$ denotes the number of occurrences of $j$ in $\beta$ up to car $c_i$.
\end{obs}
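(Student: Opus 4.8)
The plan is to first extract the block structure forced by the Fubini condition and then run the ordinary parking process---legitimate since $\fr{n}\subseteq\pf{n}$ by Lemma~\ref{lem:FRsArePFs}---one block at a time. List the distinct values appearing in $\beta$ as $v_1<v_2<\cdots<v_s$, and let $t_i$ be the number of entries of $\beta$ equal to $v_i$. Since $\beta$ is a parking function its minimum entry is $1$, so $v_1=1$, and Definition~\ref{defin:Fubini_rankings} says precisely that the value following $v_i$ is $v_i+t_i$, i.e.\ $v_{i+1}=v_i+t_i$. Consequently the intervals $B_i=\{v_i,v_i+1,\dots,v_i+t_i-1\}$ have size $t_i$ and partition $[n]$, so that $B_i$ is exactly the set of spots ``owned'' by the value $v_i$. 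This tiling is the structural input behind everything that follows.

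The crux is to show that the $t_i$ cars whose preference is $v_i$ occupy exactly the spots in $B_i$, and hence never touch a spot belonging to another value. I would establish this by a counting argument that avoids tracking the interleaved order in which cars actually park. Fix $m=v_{i+1}-1$. By the tiling exactly $m$ entries of $\beta$ are at most $m$, namely those equal to $v_1,\dots,v_i$. A car cannot park to the left of its preference, so every car occupying a spot in $\{1,\dots,m\}$ has preference at most $m$; and since $\beta\in\pf{n}$ all $n$ spots are filled bijectively, so the spots $\{1,\dots,m\}$ are filled by exactly $m$ distinct cars. As there are also exactly $m$ cars of preference at most $m$, these two sets of cars coincide: every car with preference $\le m$ parks in $\{1,\dots,m\}$, with no overflow. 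Applying this with $m=v_{i+1}-1$ and again with $m=v_i-1$ and comparing the two conclusions, the cars of preference $v_i$ park within $B_i=\{v_i,\dots,v_{i+1}-1\}$ and nowhere else. This simultaneously proves the non-interaction claim, since distinct blocks are disjoint.

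Finally, within a single block only cars of preference $v_i$ are ever present, so the forward-search parking rule reduces to filling $B_i$ from the left in queue order: the first car preferring $v_i$ finds spot $v_i$ empty and parks there, the second finds $v_i$ taken and parks in $v_i+1$, and in general the car that is the $(k+1)$st occurrence of the value $j=v_i$ read from left to right parks in spot $j+k$, where $k$ counts the earlier occurrences of $j$. This yields the claimed spot $j+k$ and completes the argument.

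I expect the main obstacle to be the temptation to argue ``smaller values park first,'' which is false: cars enter in queue order, not in order of preference, so a car preferring $v_i$ may well park before some car preferring $v_{i-1}$. The counting argument above is the clean way around this, since it pins down \emph{which} spots each value's cars fill using only the tiling and the fact that a parking function fills every spot, without ever reasoning about the temporal order of the process.
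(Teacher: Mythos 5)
Your proof is correct. Note that the paper itself supplies no formal proof of this statement---it is presented as an Observation whose justification is left implicit, and the closest the paper comes to an argument in this spirit is the process-based induction on the number of distinct values in the proof of Lemma~\ref{lem: phi into U_n} and the block-by-block reasoning of Observation~\ref{obs:UPFblocksDoNotInteract}. Your route is genuinely different from that implicit reasoning: rather than tracking the parking process one value at a time, you extract from Definition~\ref{defin:Fubini_rankings} (together with $v_1=1$, via Lemma~\ref{lem:FRsArePFs}) the tiling of $[n]$ by the intervals $B_i$, and then run a global conservation argument---comparing the $m$ cars of preference at most $m$ with the $m$ spots $\{1,\dots,m\}$ for $m=v_{i+1}-1$ and $m=v_i-1$---to pin down that the $t_i$ cars preferring $v_i$ occupy exactly $B_i$. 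This is sound because a car never parks left of its preference and a parking function fills all $n$ spots bijectively, so the inclusion of two equinumerous sets forces equality; and since occupancy is permanent, the final-position conclusion also justifies your last paragraph's claim that only preference-$v_i$ cars are ever present in $B_i$, which is what the induction on occurrences of $j$ needs to conclude that the $(k+1)$st such car parks in spot $j+k$. What your approach buys is precisely what you identify: it avoids the false temptation to reason temporally (``smaller values park first''), which the paper's process-style arguments must handle by induction on distinct values instead. One small reading point you resolved correctly: ``up to car $c_i$'' in the statement must mean strictly before $c_i$, since the first occurrence of $j$ parks in spot $j$ itself.
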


Next we provide an independent proof of the following result \cite[Theorem 5.12]{Hadaway_unit_interval}.

\begin{theorem}\label{thm:bijection upf and fr}
If $n\geq 1$, then the sets $\upf{n}$ and $\fr{n}$ are in bijection, and 
$|\upf{n}| = |\fr{n}| = \fb{n}$.
\end{theorem}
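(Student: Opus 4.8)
The plan is to construct an explicit bijection $\Phi : \upf{n} \to \fr{n}$ directly, rather than merely comparing cardinalities, since the theorem asserts a bijection. The natural candidate map sends a unit interval parking function $\alpha$ to the outcome of parking the cars according to $\alpha$, recording for each car the label of the spot in which it actually parks. That is, define $\Phi(\alpha) = (o_1, \ldots, o_n)$ where $o_i$ is the parking spot occupied by car $i$. Two things must be verified: first, that $\Phi(\alpha)$ is genuinely a Fubini ranking, and second, that $\Phi$ is a bijection. I would organize the argument around the block structure emphasized in the introduction, grouping the cars by maximal runs of consecutively-occupied spots.

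\textbf{First I would show $\Phi$ lands in $\fr{n}$.} Since $\alpha \in \upf{n}$, every car parks either in its preferred spot $a_i$ or in $a_i + 1$, so the outcome tuple $\Phi(\alpha)$ is a permutation of $[n]$ read as a set of \emph{occupied} spots — indeed all $n$ spots are filled. But $\Phi(\alpha)$ is not the identity permutation in general; the content I actually want to track is the \emph{preference} multiset, because the Fubini-ranking condition is about repeated values. The cleaner formulation is to let $\Phi(\alpha) = \alpha$ restricted to viewing the preferences themselves, and to argue that the unit interval parking condition forces the preference multiset of $\alpha$ to satisfy exactly the Fubini gap condition of Definition~\ref{defin:Fubini_rankings}. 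Concretely, if $k$ cars prefer spot $x$, then under the unit interval rule they must occupy spots $x, x+1, \ldots, x+k-1$ (a car shifts by at most one, so $k$ cars with the same preference can only fit if these spots form a consecutive block), which blocks spots $x$ through $x+k-1$ and forces the next distinct preference to be at least $x+k$; a gap argument shows it is exactly $x+k$. This is precisely the Fubini condition, giving $\alpha \in \fr{n}$, hence $\upf{n} \subseteq \fr{n}$.

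\textbf{Next I would establish the reverse containment and thereby equality of the two sets as subsets of $[n]^n$.} Using Observation~\ref{obs:FRBlockDoNotInteract}, cars corresponding to distinct repeated values in a Fubini ranking $\beta$ park in disjoint, non-interacting blocks, and a car preferring $j$ parks in spot $j+k$ where $k$ counts prior occurrences of $j$. Because each block of $k$ equal preferences occupies exactly the $k$ consecutive spots starting at $j$, every car parks within one unit of its preference, so $\beta \in \upf{n}$; hence $\fr{n} \subseteq \upf{n}$. Combined with the first step this yields $\upf{n} = \fr{n}$ as sets, so the identity map is the desired bijection, and the cardinality equality $|\upf{n}| = |\fr{n}| = \fb{n}$ follows immediately from the Proposition of Cayley recalled above.

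\textbf{The main obstacle} is the forward direction's block argument: one must argue carefully that in a unit interval parking function, all cars sharing a preference value $x$ (together with any cars whose preferences chain into them) occupy a single uninterrupted run of spots, so that no spot strictly between two used values is skipped. This requires showing that a gap in the occupied spots would force some car to travel more than one spot past its preference, contradicting the unit interval rule; handling the interaction of adjacent preference values $x$ and $x+1$ correctly — ensuring the run-lengths align to produce exactly the ``$+k$'' jump rather than a larger jump — is the delicate point. I would phrase this via induction on the occupied spots read left to right, or equivalently by processing the blocks in increasing order of their starting preference, which keeps the non-interaction of distinct blocks (Observation~\ref{obs:FRBlockDoNotInteract}) available throughout.
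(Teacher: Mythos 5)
There is a genuine gap: your central claim, that $\upf{n}$ and $\fr{n}$ are \emph{equal} as subsets of $[n]^n$ and the identity is the bijection, is false for every $n\geq 3$. For the forward containment, $(1,1,2)\in\upf{3}$ (cars park in spots $1,2,3$ with displacements $0,1,1$), but it is not a Fubini ranking: two entries equal $1$, so Definition~\ref{defin:Fubini_rankings} forces the next largest value to be $3$, not $2$. Your ``gap argument'' asserting that the next distinct preference must be exactly $x+k$ is where this fails: the unit interval rule permits a later car to \emph{prefer} an already-occupied spot strictly inside the run $x,\dots,x+k-1$ and be bumped one spot forward — the chaining you flag as the ``delicate point'' in your obstacle paragraph is not a technicality to be handled but a genuine phenomenon that kills the claim. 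For the reverse containment, $(1,1,1)\in\fr{3}$ (three competitors tied for first), but as a parking preference the third car finds spots $1$ and $2$ occupied and cannot park in spot $1+1$: a tie of $k$ competitors at rank $j$ forces the last car to park in spot $j+k-1$, displacement $k-1>1$ once $k\geq 3$. Your sentence ``every car parks within one unit of its preference'' is simply incorrect for such blocks. Indeed the paper itself notes (Section~\ref{sec: open}) that $|\fr{n}\cap\upf{n}|$ is the nontrivial sequence $1,3,12,66,450,\dots$, strictly smaller than $\fb{n}=1,3,13,75,541,\dots$ for $n\geq 3$, so no identity-map argument can work.

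The paper's actual proof constructs a nontrivial bijection precisely to repair both failures. The map $\phi:\fr{n}\to\upf{n}$ (Lemma~\ref{lem: phi into U_n}) replaces the $k$th occurrence of a repeated value $x$, for $k>1$, by $x+k-2$: this keeps every car parking in the same spot it would have occupied under the Fubini ranking, but caps each displacement at $1$ (e.g.\ $(1,1,1)\mapsto(1,1,2)$). The inverse $\psi$ (Lemma~\ref{lem: psi into F_n}) collapses each entry of a unit interval parking function to the minimum of its block in the block decomposition of Definition~\ref{blocks}, and the proof that $\psi$ lands in $\fr{n}$ and that $\phi,\psi$ are mutually inverse leans on the rearrangement characterization of Theorem~\ref{thm:upf_rearrangement} (entries within each block occur in weakly increasing relative order). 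If you want to salvage your outline, the outcome map you first wrote down, $\alpha\mapsto(o_1,\dots,o_n)$, is also not injective on $\upf{n}$ (it outputs a permutation, and there are only $n!<\fb{n}$ of those), so the value-modification idea of $\phi$, or something equivalent to it, is really needed.
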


To make our bijection precise we begin by defining the displacement of a parking function: Given a parking function $\alpha=(a_1,a_2,\ldots,a_n)$, if car $i$ parks in spot $s_i$, then  $s_i-a_i$ is the \emph{displacement} of car $i$, 
and the sum $d(\alpha)=\sum_{i=1}^n (s_i-a_i)$ is called the \textit{total displacement} of $\alpha$.
We note that there are interesting results related to the study of parking functions and their displacement. This includes a bijection that takes the total displacement of parking functions to the number of inversions of labeled trees \cite{Kreweras}. 
In our work, displacement is utilized as a way to test whether a parking function is a unit interval parking function. Namely, we know that a parking functions is a unit interval parking function if and only if each car has displacement at most 1. In what follows, we say that whenever the displacement of a car is equal to zero, then the car is \textit{lucky}. Lucky cars and tree inversions were studied in \cite{GesselSeo,SeoShin}.

To prove Theorem~\ref{thm:bijection upf and fr} we 
define a bijection from Fubini rankings (which we know are parking functions by Lemma~\ref{lem:FRsArePFs}) to unit interval parking functions that modifies the entries of the Fubini ranking. 
This map will do this in such a way so as to ensure that each car still parks in the same space, but is displaced at most one, which makes it a unit interval parking function.

\begin{lemma}\label{lem: phi into U_n}
Given $\beta=(b_1,\ldots,b_n) \in \fr{n}$, define $\phi: \fr{n} \to \upf{n}$ by
$\phi(\beta)=(a_1,\dots,a_n) \in \upf{n}$, where

    $$a_i = \begin{cases}
        b_i & \text{if $|\{1\leq j\leq i:b_j=b_i\}|= 1$}\\
        b_i+k-2 & \text{if $|\{1\leq j\leq i:b_j=b_i\}|=k>1$.} 
    \end{cases}$$
The map $\phi$ is well defined.
\end{lemma}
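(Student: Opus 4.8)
The plan is to read ``well defined'' as the assertion that $\phi$ lands in its stated codomain: the count $k=|\{1\le j\le i: b_j=b_i\}|$ is unambiguously determined by $\beta$ and $i$, so the formula defines a unique tuple $\phi(\beta)$, and the real work is to check that this tuple is actually a unit interval parking function. By Lemma~\ref{lem:FRsArePFs} we already know $\beta\in\pf{n}$, so I would prove the statement by comparing the parking of $\phi(\beta)$ against the parking of $\beta$: the goal is to show that every car parks in the \emph{same} spot under $\phi(\beta)$ as under $\beta$ (so $\phi(\beta)$ is a parking function), but now with displacement at most $1$ (so it is a unit interval parking function).

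First I would record the block structure forced by Definition~\ref{defin:Fubini_rankings}. Writing the distinct values of $\beta$ in increasing order as $v_1<v_2<\cdots<v_t$ with multiplicities $m_1,\dots,m_t$, the Fubini condition gives $v_1=1$ and $v_{s+1}=v_s+m_s$ (with the convention $v_{t+1}=n+1$). Hence the spot-intervals $B_s=\{v_s,v_s+1,\dots,v_{s+1}-1\}$ have size $m_s$ and partition $[n]$, and by Observation~\ref{obs:FRBlockDoNotInteract} the cars preferring $v_s$ occupy exactly $B_s$ when $\beta$ is parked, the $k$-th such car (in index order) landing in spot $v_s+k-1$.

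Next I would compute the effect of $\phi$ one block at a time. Within block $s$, the new preferences assigned by $\phi$ to the occurrences of $v_s$, listed in index order, are $v_s,\,v_s,\,v_s+1,\,v_s+2,\dots,v_s+m_s-2$, all of which lie inside $B_s$. Since the $B_s$ are pairwise disjoint and every preference and every landing spot of a block-$s$ car stays within $B_s$, cars of distinct blocks never compete for a spot, so the parking of $\phi(\beta)$ can be analyzed independently on each block. A short induction on $k$ then shows that the $k$-th occurrence parks in $v_s+k-1$: the first occurrence takes $v_s$ with displacement $0$, and for $k\ge 2$ the spot $v_s+k-2$ is already taken by the $(k-1)$-st occurrence while $v_s+k-1$ is still free, so the car advances exactly once and has displacement $1$.

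Combining these, every car parks in the same spot as in $\beta$, so $\phi(\beta)\in\pf{n}$, and each car has displacement $0$ or $1$, so $\phi(\beta)\in\upf{n}$, which is exactly well-definedness. The main obstacle I anticipate is the confinement/non-interaction argument: one must verify that the freshly introduced preferences $v_s+1,\dots,v_s+m_s-2$ (values that are \emph{absent} from $\beta$) neither push a block-$s$ car out of $B_s$ nor collide with a neighboring block. This is precisely where the equalities $v_{s+1}=v_s+m_s$ are essential, since they guarantee that the last car of block $s$ lands at $v_{s+1}-1$ rather than spilling into $B_{s+1}$.
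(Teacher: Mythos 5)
Your proof is correct, and it rests on the same structural facts as the paper's own argument: the block decomposition forced by Definition~\ref{defin:Fubini_rankings} (the relations $v_1=1$ and $v_{s+1}=v_s+m_s$), the non-interaction of blocks (Observation~\ref{obs:FRBlockDoNotInteract}), and the computation that within a block the new preferences $v_s,v_s,v_s+1,\ldots,v_s+m_s-2$ fill the spots $v_s,\ldots,v_s+m_s-1$ with displacement at most $1$. The difference is organizational rather than conceptual: the paper proceeds by induction on the number $m$ of distinct values of $\beta$, with base case $\beta=(1,1,\ldots,1)$, peeling off the index set $J$ carrying $\max(\beta)$ and applying the inductive hypothesis to the remaining indices, whereas you treat all blocks simultaneously and prove confinement directly, in effect via the invariant that the occupied spots of each interval $B_s$ always form an initial segment. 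Your version buys two things: it makes explicit the fact, stated only informally in the paper just before the lemma, that every car parks in the \emph{same} spot under $\phi(\beta)$ as under $\beta$; and it isolates exactly where the equality $v_{s+1}=v_s+m_s$ prevents a block-$s$ car from spilling into $B_{s+1}$, a point the paper leaves implicit in Observation~\ref{obs:FRBlockDoNotInteract} and in the remark that ``the next available rank is $|I|+1$.'' Two pedantic notes: the claim $v_1=1$ follows from the intended reading of Definition~\ref{defin:Fubini_rankings} (no skipped ranks, so rank $1$ is attained), which is the same reading the paper uses in the proof of Lemma~\ref{lem:FRsArePFs}; and your displayed list of new block-$s$ preferences should be read as just $v_s$ when $m_s=1$, which is the trivial case of your inner induction.
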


\begin{proof} 
We show that $\phi(\beta)$ is a unit interval parking function whenever $\beta$ is a Fubini ranking. To do so we induct on $m$, the number of distinct values in $\beta$.

If $m=1$, then $\beta=(1,1\ldots,1)$ and $\phi(\beta)=(1,1,2,3,4,\ldots,n-1)$.
As a parking function, $\phi(\beta)$ parks the cars in order $1,2,\ldots, n$, and they all park within one spot of their preference. Thus $\phi(\beta)$ a unit interval parking function.

Suppose that if $\beta$ has $m$ distinct values, then $\phi(\beta)$ is a unit interval parking function. 
Now consider $\beta$ having $m+1$ distinct values. 

Let $I\subseteq[n]$ be the set of indices where the smallest $m$ distinct values of $\beta$ occur, and let $J=[n]\setminus I$ be the set of indices where the max value of $\beta$ occurs.
Note that by assumption, the entries in $\phi(\beta)$ indexed by $I$ park in the first $|I|$ parking spots, and they do so by parking at most one away from their preference. 

Now consider the entries in $\beta$ indexed by $J$, which contain the values $\max(\beta)$. 
Since $\beta$ is a Fubini ranking, we know that the next available rank is $|I|+1$.
Moreover note that $|I|+1=\max(\beta)$, as otherwise the rank $|I|+1$ would be unearned and $\beta$ would not have been a Fubini ranking. 
To finish building $\phi(\beta)$, 
we need only consider the values in index set $J$. 
By definition, those values in $\beta$ were all $\max(\beta)$ and in $\phi(\beta)$ they have now been replaced with the values $\max(\beta),\max(\beta),\max(\beta)+1,\ldots,\max(\beta)+|J|-2$, appearing in this relative order at the indices in $J$. 
We conclude by now noting that under $\phi(\beta)$ the cars indexed by $J$ park in spots $|I|+1,|I|+2,\ldots, n=\max(\beta),\max(\beta)+1,\ldots,n$. Thus, the cars indexed by $J$ parking under $\phi(\beta)$ are displaced at most one from their preference, as desired.
\end{proof}

In \cite[Lemma 5.6]{Hadaway_unit_interval} Hadaway established that if $\alpha \in \upf{n}$, then a value $a$ may appear at most twice. Below, we characterize when a rearrangement of a unit interval parking function is again a unit interval parking  function.

To make our approach precise, we begin with the following general definition on parking functions, which in Theorem~\ref{thm:upf_rearrangement} below we specialize to unit interval parking functions.
\begin{defin}\label{blocks}
Given $\alpha = (a_1, \ldots, a_n) \in \pf{n}$, let $\alpha^\uparrow = (a'_1, \ldots, a'_n)$ be the weakly increasing rearrangement of $\alpha$.
The partition of $\alpha^\uparrow$ as a concatenation denoted $\alpha'=\pi_1|\pi_2|\cdots|\pi_m$, where $\pi_j$ begins at (and includes) the $j$th entry $\alpha^\uparrow$ satisfying $a_i'=i$, is the \textit{block structure} of $\alpha$. Each $\pi_i$ is called a \textit{block} of $\alpha$.
\end{defin}
For example, if $\alpha=(8,1,5,5,1,2,4,7)\in\pf{8}$, then $\alpha^\uparrow=(1,1,2,4,5,5,7,8)$ and $\alpha'=112|4|55|7|8$.
\begin{obs}\label{obs:UPFblocksDoNotInteract}
Note that if $\alpha$ is a unit interval parking function with block structure $\pi_1|\pi_2|\cdots|\pi_m$, then as cars in each block park, they park without interacting with cars in other blocks. To make this precise we let $\ell(\pi_i)$ denote the length of $\pi_i$ for all  $i\in[m]$.
Then cars in $\pi_1$ park and occupy spots $1$ through $\ell(\pi_1)$.
By definition of $\pi_2$ car $c_{\ell(\pi_1)+1}$ has preference $\ell(\pi_1)+1$, and hence parks precisely in spot $\ell(\pi_1)+1$. 
As all other elements in $\pi_2$ are in weakly increasing order the remaining cars of $\pi_2$ park in spots $\ell(\pi_1)+2, \ell(\pi_1)+3,\ldots, \ell(\pi_1)+\ell(\pi_2)$. This shows that the cars in $\pi_1$ and $\pi_2$ park without affecting each other and this fact extends to all $\pi_j$ with $2<j\leq m$.

Lastly, we note that each block of $\alpha$ corresponds to a block of repeated values in a Fubini ranking, as described in Observation \ref{obs:FRBlockDoNotInteract}.
\end{obs}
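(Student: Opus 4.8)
The plan is to deduce the non-interaction of blocks from one clean fact about the weakly increasing rearrangement, together with the defining property of the block boundaries. Since $\upf{n}\subseteq\pf{n}$ and parking functions are closed under rearrangement, $\alpha^\uparrow=(a_1',\dots,a_n')$ is a parking function, so at least $i$ of its entries are $\le i$; because it is weakly increasing these must be its first $i$ entries, giving $a_i'\le i$ for every $i\in[n]$.

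First I would prove the following lemma: if $\beta=(b_1,\dots,b_n)$ is a weakly increasing parking function, then the $i$th car parks in spot $i$. The argument is induction on $i$. For $i=1$, $b_1\le 1$ forces $b_1=1$, so car $1$ parks in spot $1$. Assuming cars $1,\dots,i-1$ occupy spots $1,\dots,i-1$, the $i$th car has $b_i\le i$: if $b_i=i$ its preferred spot is empty and it parks there; if $b_i\le i-1$ its preferred spot is occupied, so it advances to the first empty spot, which is spot $i$ since exactly $1,\dots,i-1$ are taken. In all cases car $i$ parks in spot $i$, and no car overflows past spot $i$.

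Next I would translate this into the block language. Write the starting indices of the blocks as $1=p_1<p_2<\cdots<p_m$ and set $p_{m+1}:=n+1$, so that $\pi_j$ consists of the entries at positions $p_j,\dots,p_{j+1}-1$ and $\ell(\pi_j)=p_{j+1}-p_j$. By Definition~\ref{blocks} the boundary identity $a'_{p_j}=p_j$ holds, so by the lemma the first car of $\pi_j$ parks in spot $p_j$ with zero displacement (it is lucky). Moreover, for every index $i$ with $p_j\le i<p_{j+1}$, monotonicity and the bound $a'_i\le i$ give $p_j=a'_{p_j}\le a'_i\le i\le p_{j+1}-1$, so every preference occurring in $\pi_j$ lies in the interval $[p_j,p_{j+1}-1]$. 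Combining this with the lemma, which sends the cars at positions $p_j,\dots,p_{j+1}-1$ to spots $p_j,\dots,p_{j+1}-1$, shows that the cars of $\pi_j$ both prefer and occupy exactly the spots of that block. In particular $\pi_1$ fills spots $1,\dots,\ell(\pi_1)$, the anchor of $\pi_2$ parks in spot $\ell(\pi_1)+1$, and the remaining cars of $\pi_2$ fill spots $\ell(\pi_1)+2,\dots,\ell(\pi_1)+\ell(\pi_2)$, and likewise for each $\pi_j$; since every block's preferences and parked positions are confined to its own spot interval, the parking outcome of one block is independent of the others, which is exactly the asserted non-interaction. For the closing remark, the explicit form of $\phi$ in Lemma~\ref{lem: phi into U_n} sends a value of a Fubini ranking repeated $k$ times to a length-$k$ block of the unit interval parking function occupying a single interval of spots, matching Observation~\ref{obs:FRBlockDoNotInteract}, so the blocks of $\alpha$ correspond exactly to the repeated-value runs of the associated Fubini ranking.

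The only point requiring care is the inductive step of the lemma---verifying that a car preferring an already-occupied spot $b_i\le i-1$ advances to spot $i$ and not beyond---which is guaranteed precisely by the parking-function bound $a'_i\le i$. There is no deeper obstacle; the substance of the observation is entirely in packaging the bound $a'_i\le i$ and the boundary identity $a'_{p_j}=p_j$ so that the confinement of each block to its own spot interval becomes transparent.
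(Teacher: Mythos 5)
Your proposal is correct and follows essentially the same route as the paper: you park the weakly increasing rearrangement sequentially so that car $i$ lands in spot $i$, use the boundary identity $a'_{p_j}=p_j$ together with monotonicity and the parking-function bound $a'_i\le i$ to confine each block's preferences and occupied spots to its own interval, which is exactly the paper's inline argument for $\pi_1$, $\pi_2$, and beyond. Your version merely makes the paper's informal reasoning explicit (the stand-alone lemma on weakly increasing parking functions and the index bookkeeping $p_1<\cdots<p_m$), which is a welcome tightening but not a different approach.
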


\begin{theorem}\label{thm:upf_rearrangement}
Given $\alpha = (a_1, \ldots, a_n) \in \upf{n}$, let $\alpha^{\uparrow}$ be its weakly increasing rearrangement and $\alpha'=\pi_1|\pi_2|\dots|\pi_m$ be the block structure of $\alpha$ (as in Definition~\ref{blocks}).
\reqnomode
\begin{enumerate}
    \item  There are
\begin{align}
\binom{n}{|\pi_1|,\ldots, |\pi_m|}   \label{eq:rearrange count}
\end{align}
possible rearrangements $\sigma$ of $\alpha$ such that $\sigma$ is still a unit interval parking function.
\item A rearrangement $\sigma$ of $\alpha$ is in $\upf{n}$ if and only if the entries in $\sigma$ respect the relative order of the entries in each of the blocks $\pi_1,\pi_2,\ldots,\pi_m$.
\end{enumerate}
\end{theorem}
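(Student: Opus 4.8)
The plan is to prove parts (2) and (1) in that order, since the characterization in (2) immediately yields the count in (1). The key conceptual tool is Observation~\ref{obs:UPFblocksDoNotInteract}, which tells us that in any unit interval parking function with block structure $\pi_1|\pi_2|\cdots|\pi_m$, the cars belonging to block $\pi_j$ occupy exactly the contiguous spots from $\ell(\pi_1)+\cdots+\ell(\pi_{j-1})+1$ through $\ell(\pi_1)+\cdots+\ell(\pi_j)$, and do so entirely independently of the cars in the other blocks. I will leverage the fact (from Hadaway, cited just before the theorem) that each value appears at most twice, so within a single block $\pi_j$ the entries have the form $v,v,v+1,v+2,\ldots,v+\ell(\pi_j)-2$ where $v=\ell(\pi_1)+\cdots+\ell(\pi_{j-1})+1$, i.e.\ the smallest value is repeated once and every larger value is a lucky singleton.

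For part (2), I would argue both directions. For the ``if'' direction, suppose a rearrangement $\sigma$ preserves the relative order of the entries within each block $\pi_j$. Since the blocks occupy disjoint contiguous ranges of spots and never interact, it suffices to check that each block parks with displacement at most one in $\sigma$. Within a block, the only entry that can be displaced is the second copy of the repeated value $v$; because its relative order after the first copy of $v$ is preserved, the first copy still takes spot $v$ and the second takes spot $v+1$, a displacement of exactly one, while every other (singleton) entry is lucky. Hence $\sigma\in\upf{n}$. For the ``only if'' direction, I would show the contrapositive: if $\sigma$ does \emph{not} respect the relative order of some block, then $\sigma\notin\upf{n}$. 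The obstruction is precisely the repeated value $v$ in some block: if the second copy of $v$ appears before the first copy in $\sigma$, then when the later copy arrives it finds both spots $v$ and $v+1$ already occupied (or more generally, a car is forced to displace by more than one), so $\sigma$ fails the unit interval rule. The main subtlety, and the step I expect to require the most care, is ruling out the possibility that a reordering \emph{across} blocks could somehow still produce a valid unit interval parking function even when some intra-block order is violated; here I will lean on the non-interaction of blocks (Observation~\ref{obs:UPFblocksDoNotInteract}) to confine the analysis to a single block at a time, so that a violation inside one block cannot be ``rescued'' by cars from another block.

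For part (1), once (2) is established the count is a direct combinatorial consequence. A rearrangement $\sigma$ of $\alpha$ is a unit interval parking function if and only if it preserves the relative order of entries within each block $\pi_1,\ldots,\pi_m$. Choosing such a rearrangement amounts to choosing which of the $n$ positions are occupied by the entries of each block $\pi_i$ (an unordered choice of $|\pi_i|$ positions for each block, the blocks partitioning the $n$ positions), since once the set of positions for a block is fixed, the within-block order is forced. The number of ways to partition $n$ labeled positions into blocks of sizes $|\pi_1|,\ldots,|\pi_m|$ is exactly the multinomial coefficient
\[
\binom{n}{|\pi_1|,\ldots,|\pi_m|},
\]
which is \eqref{eq:rearrange count}.

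<br>

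I should double check one point in the ``if'' direction: within a block there may be entries that are repeated, but Hadaway's lemma guarantees each value appears at most twice across the whole tuple, so the only repeat in a block is its minimum value; this is what makes the displacement analysis clean and what I would state explicitly.
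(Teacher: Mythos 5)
Your overall architecture is sound and in fact reverses the paper's: the paper proves Part (1) first by induction on the number of blocks and only then extracts Part (2) from that proof, whereas you prove (2) first and obtain (1) by a direct multinomial interleaving count. That count is valid (the blocks occupy pairwise disjoint value ranges, so distinct choices of position sets yield distinct tuples, and by (2) the within-block order is forced), and this organization is arguably cleaner --- the paper itself asks, in its final section, for a proof of the rearrangement characterization that does not route through the inductive count. The problem is that your proposal leaves the technical heart of the theorem unproved and gets the local displacement analysis wrong.

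Concretely: (a) the claim that Hadaway's at-most-twice lemma yields the block form $v,v,v+1,\ldots,v+\ell(\pi_j)-2$ is false as stated: the string $v,v,v+1,v+1$ is weakly increasing, repeats no value more than twice, and is consistent with the block-start condition of Definition~\ref{blocks}, yet it cannot occur as a block of a unit interval parking function. Excluding it requires a displacement/pigeonhole argument (a UPF forces all cars with preference at most $v+1$ into spots at most $v+2$, and here there would be too many such cars) --- exactly the kind of argument that constitutes the paper's base case; you assume its conclusion. (b) Your bookkeeping that ``every other (singleton) entry is lucky'' is wrong and internally inconsistent: in a block $v,v,v+1,\ldots,v+\ell-2$ parking into spots $v,\ldots,v+\ell-1$, every car after the first is displaced exactly one (the car preferring $v+1$ finds that spot filled by the second $v$-car, and so on down the line). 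The ``if'' direction survives, since all displacements are still at most $1$, but the same misconception infects your ``only if'' direction: the proposed obstruction ``the second copy of $v$ appears before the first'' is vacuous, because the two copies are equal entries and their relative order is immaterial; the genuine violation is a strictly larger block entry preceding a strictly smaller one. Moreover, proving that such a violation forces displacement at least $2$ \emph{before} you know $\sigma\in\upf{n}$ means you cannot yet invoke Observation~\ref{obs:UPFblocksDoNotInteract}, which is established only for tuples already known to be unit interval parking functions; leaning on it there is circular. The repair is to run ``only if'' uncontraposed, as the paper does: assume $\sigma\in\upf{n}$, apply the observation to $\sigma$ to confine attention to a single block, and then prove the single-block rigidity statement --- a one-block UPF must be weakly increasing --- by an explicit displacement argument, which is the missing lemma your proposal needs.
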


\begin{proof}
To establish Part (1) we prove the following claims by inducting on the number of blocks in $\alpha'$.
\begin{itemize}
    \item The weakly increasing rearrangement of an element $\alpha$ in $\upf{n}$ is also in $\upf{n}$.
    \item If $\pi_j$ is a weakly increasing block of $\alpha'$, then those elements appear in weakly increasing order in $\alpha$.
    \item Any two blocks $\pi_i$ and $\pi_j$ are independent of each other, i.e. as sets $\pi_i\cap\pi_j=\emptyset$ for all $i\neq j$.
    \item The formula in \eqref{eq:rearrange count} holds.
\end{itemize}

Suppose $m=1$. Then $\alpha^\uparrow=(a_1',\ldots,a_n')$ is such that $1\leq a_i'\leq a_{i+1}'$, $a_i'\leq i$ since it is a parking function, and $a_i'\neq i$ for $i>1$ by assumption. Thus, $a_1'=1$, which implies $a\leq a_2'<2$. That is $a_2'=1$ and car 2 parks in spot 2 and is displaced 1 spot from its preference. For car 3 to have displacement at most 1 we must have $2<a_3'$ since spots 1 and 2 are occupied so that $2\leq a_3'<3$, i.e. $a_3'=2$. Continuing in this fashion establishes $\alpha^\uparrow =(1,1,2,3\ldots,n-1)$. Next we establish that $\alpha=\alpha^{\uparrow}$ by supposing that $\alpha$ is out of weakly increasing order to obtain a contradiction.

Let $i$ be the smallest index such that $a_i>i-1$ and there exists some $k>i$ satisfying $a_i>a_j$. Let $j$ be the largest such $k$.
Parking under $\alpha$, cars $1,2,3,\ldots,i-1$ park in spots $1,2,\ldots,i-1$, respectively.
Car $i$ with preference $a_i>i-1$ parks in spot $a_i$ leaving spots $i,i+1,\ldots,a_i-1$ unoccupied after it parks.
Note that 
of the cars numbered $i+1, i+2, \ldots, a_i-a_j+i$, those with preferences in the set $\{i,i+1,\ldots,a_i-1\}$ park without displacement and those with preference smaller that $a_j$ get displaced one unit. Also, one of the cars with preference smaller than $a_j$ has parked in spot $a_j$.
Further, any car with preference larger than $a_j$ parks with a displacement at most one. This is implied because $\alpha$ consists of the numbers $1,1,2,3,\ldots,n-1$ so that all numbers not equal to one appear exactly once in $\alpha$.
This then ensures that spots $i,i+1,\ldots a_i-1$ are occupied by cars arriving and parking before car $j$. 
Thus, the displacement of car $j$ is at best $a_i-a_j+1>1+1\geq 2$, which contradicts that $\alpha\in UPF_n$. Therefore, the unique unit interval parking function consisting of a single block is $(1,1,2,\ldots, n-1)$, and it is in weakly increasing order.
Furthermore, there are $\binom{n}{|\pi_1|}=\binom{n}{n}=1$ possible rearrangements that produce a unit parking function.

Suppose that if $\alpha\in \upf{n}$ is such that $\alpha'$ has $m$ distinct blocks, then those blocks appear in $\alpha$ as weakly increasing strings, and there are $\binom{n}{|\pi_1|, \ldots , |\pi_m|}$, ways to reorder $\alpha$ so that the result is again in $\upf{n}$. 

Now consider $\alpha\in\upf{n}$ having $m+1$ distinct blocks in $\alpha'$. 
We may consider the block structure of $\alpha$ as the concatenation 
$\alpha'=\beta'\gamma'$, where the block structure of $\beta$ is given by 
$\beta' = \pi_1|\pi_2|\dots|\pi_m$, and the block structure of $\gamma$ is given by $\gamma' = \pi_{m+1}$. 
As in Observation \ref{obs:UPFblocksDoNotInteract} we may now consider both $\beta$ and $\gamma$ as unit parking functions on their own, also noting that $\max(\beta)+1=\min(\gamma)$, otherwise we would not have a parking function.

By induction, $\beta \in \upf{n-|\pi_{m+1}|}$, and there are $\binom{n-|\pi_{m+1}|}{|\pi_1|\ldots |\pi_m|}$ ways to rewrite it as a unit parking function, by induction hypothesis. 
Separate from this, $\gamma$ has the form of an element in $\upf{|\pi_{m+1}|}$, and there is one way to arrange these elements. 
Because $\gamma$ fills parking spots $n-|\pi_{m+1}|+1, n-|\pi_{m+1}|+2,\ldots, n$, it does not interfere with $\beta$ being a unit parking function, and thus $\beta$ and $\gamma$ can be parked ``without interactions.''
That is, cars can be given preferences from a valid rearrangement of $\beta$ at the same time as cars are given preferences from $\gamma$. Then if $\beta$ is a unit parking function produced by rearranging $\beta$, there are
\[
\binom{n}{n-|\pi_{m+1}|} \binom{|\pi_{m+1}|}{|\pi_{m+1}|}=\binom{n}{n-|\pi_{m+1}|, |\pi_{m+1}|}  
\]
ways that we could intermix the preferences from $\beta$ and $\gamma$. 
By induction, there are $ \binom{n-|\pi_{m+1}|}{|\pi_1|,\ldots ,|\pi_m|}$ ways to rearrange $\beta$ as a unit parking function, thus there are
\[
\binom{n-|\pi_{m+1}|}{|\pi_1|,\ldots, |\pi_m|} \binom{|\pi_{m+1}|}{|\pi_{m+1}|} = \binom{n}{|\pi_1|,\ldots, |\pi_m|, |\pi_{m+1}|}
\]
ways to rewrite $\alpha$ as a unit parking function, as desired.

To prove Part (2) first note that the above proof of Part (1) implies that if $\sigma$ respects the relative orders of the entries in the blocks, then $\sigma\in\upf{n} $. 
It suffices to prove the reverse direction, that given $\alpha$ a unit interval parking function, then it respects the relative order of the entries in each block.
To show this, note that the set of cars with preferences within a block of $\alpha$ park independently of each other. 
We know that every block of $\alpha$ must be a unit interval parking function in its own right, if $\alpha$ is a unit interval parking function. 
From the base case of Part (1), we established that a single block is a unit interval parking function if and only if its entries are in weakly increasing order. 
Thus, a unit interval parking function must have all of the values within its blocks appearing in weakly increasing order, as claimed.
\qedhere
\end{proof}

\begin{rmk}
Chaves Meyles, Harris, Jordaan, Rojas Kirby, Sehayek, and Spingarn give a very interesting and surprising connection between unit interval parking functions and the permutohedron \cite{unit_perm}. They also give an independent proof of Part (2) of Theorem~\ref{thm:upf_rearrangement} using a ``prime decomposition'' of parking functions. 
\end{rmk}

We are ready to introduce the inverse to the map in Lemma~\ref{lem: phi into U_n}.
\begin{lemma}\label{lem: psi into F_n}
Given $\alpha = (a_1, \ldots, a_n) \in \upf{n}$, let $\alpha^{\uparrow}$ be its weakly increasing rearrangement and $\alpha'=\pi_1|\pi_2|\dots|\pi_m$ be the block structure of $\alpha$ (as in Definition~\ref{blocks}).
Define $\psi: \upf{n} \mapsto  \fr{n}$ by
$\psi(\alpha) = (b_1, \dots b_n)$, where, for all $i\in[n]$,
$$b_i = \min\{a\in\pi_j: a_i \in \pi_j\}.$$

The map $\psi$ is well defined.
\end{lemma}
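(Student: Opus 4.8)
The plan is to show that $\psi(\alpha)$ lands in $\fr{n}$, i.e.\ that the tuple $(b_1,\dots,b_n)$ defined by $b_i=\min\{a\in\pi_j: a_i\in\pi_j\}$ is genuinely a Fubini ranking. First I would unpack the definition of the map. For each index $i$, the value $a_i$ (equivalently $a_i'$ in the sorted rearrangement) lies in exactly one block $\pi_j$, by the disjointness of blocks established in the proof of Theorem~\ref{thm:upf_rearrangement}; so $b_i$ is well defined as a function of $i$, and it equals the smallest entry of the block containing $a_i$. By Definition~\ref{blocks}, the block $\pi_j$ begins at the $j$th position of $\alpha^\uparrow$ where $a'_\ell=\ell$, so the minimum entry of $\pi_j$ is exactly that starting position. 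Consequently the distinct values taken by $\psi(\alpha)$ are precisely the block-starting indices $s_1<s_2<\cdots<s_m$, where $s_1=1$ and $s_{j+1}=s_j+\ell(\pi_j)$ for each $j$, using the notation $\ell(\pi_j)=|\pi_j|$ from Observation~\ref{obs:UPFblocksDoNotInteract}.

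The key step is to verify the Fubini-ranking condition of Definition~\ref{defin:Fubini_rankings}: for every value $x$ appearing in $\psi(\alpha)$, if $k>0$ entries equal $x$, then the next larger value present is $x+k$. Here, if $x=s_j$ is the starting index of block $\pi_j$, then the number of indices $i$ with $b_i=s_j$ is exactly the number of entries of $\alpha$ whose sorted value lies in $\pi_j$, which is $\ell(\pi_j)$. So the multiplicity of the value $s_j$ is $k=\ell(\pi_j)$, and the next larger value appearing is $s_{j+1}=s_j+\ell(\pi_j)=x+k$, matching the requirement exactly (and when $\pi_j$ is the last block there is no larger value, as required). This is really a direct translation of the block structure into the Fubini condition, so I would present it as a short verification rather than an induction.

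The main obstacle, and the only place requiring genuine care, is confirming that the multiplicity of the block-starting value $s_j$ in $\psi(\alpha)$ is indeed $\ell(\pi_j)$ and not something smaller. This requires observing that $\psi$ collapses \emph{every} entry of $\alpha$ whose sorted position falls in block $\pi_j$ down to the single value $s_j$: both copies of a repeated value, and the single value at the block's start, all map to $s_j$ because they all lie in the same block $\pi_j$. One must check that no entry from block $\pi_j$ accidentally maps to a different block's minimum; this follows from the disjointness of the blocks together with the fact that each $a_i$ belongs to a unique block. Once this counting is pinned down, the Fubini condition follows immediately from the recursion $s_{j+1}-s_j=\ell(\pi_j)$, establishing that $\psi(\alpha)\in\fr{n}$ and hence that $\psi$ is well defined.
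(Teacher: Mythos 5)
Your proposal is correct and takes essentially the same approach as the paper: both arguments identify $\min(\pi_j)$ with the cumulative sum $1+\sum_{l=1}^{j-1}|\pi_l|$ (your $s_j$), count exactly $|\pi_j|$ copies of that value in $\psi(\alpha)$, and deduce the Fubini condition from the recursion $\min(\pi_{j+1})=\min(\pi_j)+|\pi_j|$ together with $\min(\pi_1)=1$. If anything, your explicit verification of the multiplicity claim and of Definition~\ref{defin:Fubini_rankings} spells out what the paper compresses into its final sentence, but the substance is identical.
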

Before establishing Lemma~\ref{lem: psi into F_n} we provide an example to illustrate the map $\psi$.
\begin{example}

Let $n=10$ and consider $\alpha=(2,4,7,4,1,5,7,8,2,9)$.
We leave it to the reader to verify that $\alpha \in \upf{n}$.
The weakly increasing rearrangement $alpha^{\uparrow}$ and the block structure of $\alpha$, denoted $\alpha'$, are as follows:
\begin{align*}
\alpha^{\uparrow} &=(1,2,2,4,4,5,7,7,8,9)\\
\alpha'&=\underbrace{1}_{\pi_1} | \underbrace{22}_{\pi_2} | \underbrace{445}_{\pi_3} | \underbrace{7789}_{\pi_4}.
\end{align*}
For each $i\in[10]$, 
$b_i$ is the smallest element in the block containing the value $a_i$; that is, $\beta=(2,4,7,4,1,4,7,7,2,7)$.
One can verify that $\beta$ is a Fubini ranking.
\end{example}

\begin{proof}[Proof of Lemma~\ref{lem: psi into F_n}]
Let $\alpha=(a_1,a_2,\ldots,a_n)\in \upf{n}$ and let $\alpha^\uparrow=(a_1',a_2',\ldots,a_n')$ be the weakly increasing rearrangement of $\alpha$.

Let $I\subseteq[n]$ be the set of indices satisfying $a_i'=i$ and let $m = |I|$.
Then the block structure of  $\alpha$ can be partitioned as the concatenation $\alpha'=\pi_1|\pi_2|\dots|\pi_m$, where a new block $\pi_j$ begins at  (and includes) each $a'_i$ satisfying $a'_i=i$. 

Because $\alpha$ is a unit interval parking function, and thus $\alpha\uparrow$ is a parking function,
we note that $a'_h\leq h$ for all $h\in [n]$.
This means that if $a'_h=h$, and is the minimum element of a new block, this is the first occurrence of the number $h$ in $\alpha^\uparrow$.
Therefore, there are $h-1$ elements in $\alpha^\uparrow$ less than $h$, all appearing in the blocks before whichever block contains $h$.
Let $a'_h=h \in \pi_j$.
This integer, $h$, may then be partitioned as follows:
\[
h = 1+(h-1) =1+ \sum_{l=1}^{j-1} |\pi_l|.\]
Therefore, in general, we have that $$\min ( \pi_j)= 1+\sum_{l=1}^{j-1} |\pi_l|.$$

We also note that the elements $\min(\pi_j)$ and $\min(\pi_{j+1})$ then have the relationship $\min(\pi_j)+|\pi_j| = \min(\pi_{j+1})$.
Moreover, by definition of $b_i$, there are exactly $|\pi_j|$ copies of the value $\min(\pi_j)$ in $\psi(\alpha)$.
Also, note that by definition $\alpha^\uparrow$ begins with 1, and so $\min(\pi_1)=1$, and hence there exists a $b_j=1$ in $\psi(\alpha)$. These facts together imply that $\psi(\alpha)$ is a Fubini ranking. 
\end{proof}

The following result gives a  formula for the Fubini numbers, which we did not find in the literature.

\begin{corollary}\label{coro:new_fubini_formula}
If $n\geq 1$ and $\textbf{c}=(c_1,c_2,\ldots,c_k)\vDash n$ denotes a composition of $n$ with $k$ parts, then 
 \[\fb{n}=\displaystyle\sum_{k=1}^{n}\left(\sum_{\textbf{c}=(c_1,c_2,\ldots,c_k)\models n}\binom{n}{c_1,c_2,\ldots,c_k}\right). \]
\end{corollary}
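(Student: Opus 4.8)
The plan is to read off this identity directly from the bijection established in Theorem~\ref{thm:bijection upf and fr} together with the counting result in Theorem~\ref{thm:upf_rearrangement}. Since $|\upf{n}| = |\fr{n}| = \fb{n}$, it suffices to partition $\upf{n}$ into classes whose sizes are exactly the multinomial coefficients appearing on the right-hand side, indexed by compositions of $n$. The natural way to do this is to group the unit interval parking functions according to their block structure, as introduced in Definition~\ref{blocks}.

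First I would observe that every $\alpha \in \upf{n}$ has a well-defined block structure $\alpha' = \pi_1|\pi_2|\cdots|\pi_m$, and that the tuple of block lengths $(|\pi_1|, |\pi_2|, \ldots, |\pi_m|)$ is a composition of $n$ into $m = k$ parts, since the blocks partition the $n$ entries of $\alpha^\uparrow$ and each block is nonempty. Conversely, by the base case of Theorem~\ref{thm:upf_rearrangement}, a single block of length $c$ is determined uniquely (it must be the weakly increasing string $1,1,2,\ldots,c-1$ shifted appropriately), so the entire weakly increasing rearrangement $\alpha^\uparrow$ is completely determined by the composition $\mathbf{c} = (c_1, \ldots, c_k)$ of block lengths. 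This shows that the distinct weakly increasing elements of $\upf{n}$ are in bijection with compositions of $n$.

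Next I would count, for each fixed composition $\mathbf{c} = (c_1, \ldots, c_k) \models n$, how many elements of $\upf{n}$ have that block-length profile. By Part (1) of Theorem~\ref{thm:upf_rearrangement}, the number of rearrangements of a given $\alpha$ that remain in $\upf{n}$ is exactly $\binom{n}{|\pi_1|, \ldots, |\pi_m|} = \binom{n}{c_1, \ldots, c_k}$. Thus the set of all unit interval parking functions with block lengths $\mathbf{c}$ has cardinality $\binom{n}{c_1, \ldots, c_k}$. Since every element of $\upf{n}$ belongs to exactly one such class (its block-length composition is uniquely determined, and the classes for distinct compositions are disjoint because they have distinct weakly increasing rearrangements), summing over all compositions $\mathbf{c}$ of $n$ — grouped by their number of parts $k$ ranging from $1$ to $n$ — partitions $\upf{n}$ and yields
\[
\fb{n} = |\upf{n}| = \sum_{k=1}^{n} \sum_{\mathbf{c} = (c_1, \ldots, c_k) \models n} \binom{n}{c_1, c_2, \ldots, c_k}.
\]

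The main obstacle, though it is minor here, is verifying the claim that the block-length composition uniquely determines the weakly increasing rearrangement and hence that the classes are genuinely disjoint; this rests on the base case of Theorem~\ref{thm:upf_rearrangement}, which pins down each block as a forced string. Once that is in hand, the argument is a clean double count: the block structure gives a surjection from $\upf{n}$ onto compositions of $n$, and Theorem~\ref{thm:upf_rearrangement}(1) computes each fiber size exactly. I would present it as a short paragraph invoking these two earlier results rather than reproving the parking dynamics.
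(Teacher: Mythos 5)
Your proposal is correct and follows essentially the same route as the paper: both partition the $\fb{n}$-many objects (the paper phrases it via Fubini rankings, you via $\upf{n}$, equivalent by Theorem~\ref{thm:bijection upf and fr}) according to block structure and invoke Theorem~\ref{thm:upf_rearrangement}(1) to get the multinomial $\binom{n}{c_1,\ldots,c_k}$ as the size of each class indexed by a composition $\mathbf{c}\models n$. Your write-up is in fact more careful than the paper's two-line proof, since you explicitly verify that each composition determines a unique weakly increasing representative and that the classes are disjoint.
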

\begin{proof}
    We count the number of Fubini rankings with $k$ blocks ($1 \leq k \leq n$), where blocks are defined as in Definition~\ref{blocks}.
    From Theorem~\ref{thm:upf_rearrangement}, we know that, for each composition $(c_1, c_2, \ldots, c_k)$ of $n$, the number of Fubini rankings with $c_i$ elements in the $i$th block is given by the multinomial $\binom{n}{c_1, c_2, \ldots, c_k}$.
\end{proof}

We are now ready to prove our main result.
\begin{proof}[Proof of Theorem~\ref{thm:bijection upf and fr}]
It suffices to show that $\phi$ and $\psi$ are inverses of each other. 

First, we prove that for every $\beta \in \fr{n}$, $\psi(\phi(\beta))=\beta$.
Suppose  $\beta$ has $m$ distinct values, $x_1<x_2<\cdots<x_m$. 

For each $i\in[m]$, let $I(x_i):=\{j\in[n]: b_j=x_i\}$ be the set of indices containing the value $x_i$ in $\beta$.
Since $\beta$ is a Fubini ranking, we have that
$1+\sum_{j=1}^{i-1}|I(x_j)|=x_{i}$ for all $i\in[m]$.
(In particular, $x_1 = 1$.)
For each $i\in[m]$, $\phi$ takes the values $x_i$ at index set $I(x_i)$ in $\beta$ and replaces them by the values $x_i,x_i,x_i+1,x_i+2,\ldots,x_i+|I(x_i)|-2$ at index set $I(x_i)$ (in that relative order) in $\phi(\beta)$.

To apply $\psi$ to $\phi(\beta)$, we first rearrange $\phi(\beta)$ into weakly increasing order, which is given by \[\phi(\beta)^\uparrow=(\underbrace{x_1,x_1,x_1+1,\ldots ,x_1+|I(x_1)|-2}_{|I(x_1)|},
\ldots, \underbrace{x_m,x_m,x_m+1,\ldots ,x_m+|I(x_m)|-2}_{|I(x_m)|}).\]
To partition the block structure of $\phi(\beta)$ we begin by noting that, for all $j\in[m]$, since $x_j=1+\sum_{k=1}^{j-1}|I(x_{k})|$ and the first $x_j$ appears in index $1+\sum_{k=1}^{j-1}|I(x_{k})|$, the first instance of $x_j$ begins a new block.
Thus
\[\phi(\beta)'=\pi_1|\pi_2|\cdots|\pi_m\]
where, for all $i\in[m]$, we have that
\begin{align}
\pi_j=\underbrace{x_j,x_j,x_j+1,\ldots ,x_j+|I(x_j)|-2}_{|I(x_j)|}.\label{eq:xj}
\end{align}
Now observe that the entries in $\psi(\phi(\beta))$ are defined by 
\[c_i=\min\{a\in\pi_j:a_i\in\pi_j\}.\]

\textbf{Claim:} $c_i=b_i$ for all $i\in[n]$.

To prove the claim we consider the values in the index set $I(x_i)$. 
In $\beta$, these are $x_i$'s. 
In $\phi(\beta)$, these are the values in $\pi_i$ which are $x_i,x_i,x_i+1,\ldots,x_i+|I(x_i)|-2$ (still occurring at the indices $I(x_i)$).
In $\psi(\phi(\beta))$ for any $i\in I(x_i)$, the value is $\min\{a\in\pi_i:a_i \in\pi_i\}=x_i$.
So for every $i\in I(x_i)$, $\psi(\phi(\beta))=x_i$ which is precisely the value of $\beta$ at every $i\in I(x_i)$.
Since this holds for all $i\in[m]$ and since $\cup_{i=1}^m I(x_i)=[n]$, we have established that $\psi(\phi(\beta))_i=b_i$ (showing the lists agree at every entry). Hence $\psi(\phi(\beta))=\beta$.

For the reverse composition, we begin with a unit interval parking function $\alpha=(a_1,a_2,\ldots,a_n)\in\upf{n}$ and let $\alpha^\uparrow=(a_1',a_2',\ldots,a_n')$ be the weakly increasing rearrangement of $\alpha$ and we partition the block structure of $\alpha$ as
\[\alpha'=\pi_1|\pi_2|\cdots|\pi_m\]
where a new block $\pi_j$ begins and includes each $a_i'$ satisfying $a_i'=i$.
For each $i\in[m]$, let $I(\pi_j)=\{i\in[n]:a_i\in\pi_j\}$.
Then use Lemma~\ref{lem: psi into F_n} to find $\psi(\alpha)=(b_1,b_2,\ldots,b_n)$ where $b_i=\min\{a\in\pi_j:a_i\in\pi_j\}$.
Note $\psi(\alpha)$ is a Fubini ranking.
Now consider $\phi(\psi(\alpha))=(c_1,c_2,\ldots,c_n)$.

\textbf{Claim:} $c_i=a_i$ for all $i\in[n]$.

To prove the claim we consider the values $c_i$ and $a_i$ where $i\in I(\pi_j)$ for an arbitrary $j\in[m]$.
Fix $j\in [m]$ and let $I(\pi_j)=\{k_1<k_2<\ldots<k_\ell\}$ where $\ell=|I(\pi_j)|$.
Let $\psi(\alpha)|_{I(\pi_j)}=\pi_{j,k_1}\pi_{j,k_2}\cdots \pi_{j,k_\ell}$ denote the substring of $\psi(\alpha)$ made up of the values at the indices in $I(\pi_j)$.
Note that by definition of $\psi$, $\pi_{j,k_b}=\min(\pi_j)$ for all $1\leq b\leq \ell$. 
Let $\phi(\psi(\alpha))|_{I(\pi_j)}=c_{k_1}c_{k_2}\cdots c_{k_\ell}$ denote the substring of $\phi(\psi(\alpha))$ made up of the values at the indices in $I(\pi_j)$.
By definition of $\phi$, 
\[c_{k_b}=\begin{cases}
\min(\pi_j)&\mbox{ if $b=1$}\\
\min(\pi_j)+b-2&\mbox{ if $b>1$.}
\end{cases}\]
By construction/definition
$a_{k_1}'a_{k_2}'\cdots a_{k_\ell}'=\pi_j$ and 
$a_{k_i}'=c_{k_i}$ for all $i\in[\ell]$. 
By Theorem~\ref{thm:upf_rearrangement}, these values appear in $\alpha$ in the exact same relative order. 
Thus, $a_{k_i}=c_{k_i}$ for all $i\in[\ell]$.
\end{proof}

\null
\section{Bijection between unit interval parking functions starting with \texorpdfstring{$r$}{r} distinct values and \texorpdfstring{$r$}{r}-Fubini rankings}\label{sec:r fubini}

We begin this section by defining $r$-Fubini numbers and show that they enumerate $r$-Fubini rankings.
We also establish that the $r$-Fubini rankings are in bijection with the set of unit interval parking functions, which begin with $r$ distinct values.

\begin{defin}\label{def:rFubini}
    The $r$-Fubini numbers\footnote{The $2$-Fubini numbers are defined in \cite[\href{https://oeis.org/A232472}{A232472}]{OEIS}.} are given by 
    \[\rfb{n}{r}=\sum_{k=0}^n (k+r)!\left\{\begin{matrix}n+r\\k+r\end{matrix}\right\}_r\]
    where $\left\{\begin{matrix}n+r\\k+r\end{matrix}\right\}_r$ denotes the $r$-Stirling numbers of the second kind\footnote{The $2$-Stirling numbers are defined in \cite[{\href{https://oeis.org/A008277}{A008277}}]{OEIS}.}, which enumerate set partitions of length $n+r$ into $k+r$ blocks, in which the first $r$ values appear in distinct blocks.
\end{defin}

 For ease of reference, we provide Table~\ref{tab:rfubini}, where we give the values of these numbers for $1\leq r\leq 8$, $1\leq m\leq 8$, and $m=n+r$. 
\begin{table}[!h]
\begin{center}
\begin{tabular}{ |c||c|c|c|c|c|c|c|c|}
\hline
$m=n+r\setminus r$&        1 & 2 & 3 & 4 & 5 & 6 & 7 & 8\\ \hline\hline
    
     1 & 1 & 0 & 0 & 0 & 0 & 0 & 0 & 0\\ \hline
    
     2 & 3 & 2 & 0 & 0 & 0 & 0 & 0 & 0 \\\hline
     3 & 13 & 10 & 6 & 0 & 0 & 0 & 0 & 0\\\hline
     4 & 75 & 62 & 42 & 24 & 0 & 0 & 0 & 0\\ \hline
     5 & 541 & 466 & 342 & 216 & 120& 0 & 0 & 0\\\hline
     6 & 4683 & 4142 & 3210 & 2184 & 1320 & 720 & 0 & 0\\\hline
     7 & 47293 & 42610 & 34326 & 24696 & 15960 & 9360 & 5040 & 0 \\\hline
     8 & 545835 & 498542 & 413322 & 310344 & 211560 & 131760 & 75600 & 40320\\\hline
\end{tabular}
\caption{The $r$-Fubini  numbers for $1\leq r\leq 8$, and $1\leq m\leq 8$, where $m=n+r$.}
\label{tab:rfubini}
\end{center}
\end{table}

\begin{defin}\label{defin:r_Fubini_rankings}
An \textbf{$\boldsymbol r$-Fubini ranking} of length $n+r$ is a Fubini ranking of length $n+r$ whose first $r$ values are distinct. More precisely, $\alpha=(a_1,a_2,\ldots,a_r,a_{r+1},\ldots,a_{n+r})\in\fb{n+r}$ is an $r$-Fubini ranking if $|\{a_1,a_2,\ldots,a_r\}|=r$.
 We let $\rfr{n+r}{r}$ denote the set of $r$-Fubini rankings of length $n+r$. 
\end{defin}

\begin{example}\label{ex:r_Fubini_rankings}
There are ten $2$-Fubini rankings of length three with the first two values being distinct. These are: $(1,3,1), (3,1,1), (1,2,2), (2,1,2), (1,2,3), (1,3,2), (2,1,3), (2,3,1), (3,1,2), (3,2,1)$. Note that this is precisely the value in Table~\ref{tab:rfubini} with $m=3$ and $r=2$.
\end{example}

Note that when $r=1$, the $1$-Fubini rankings allow the repetition of any of the values, and thus we recover the definition of Fubini rankings given by Hadaway \cite{Hadaway_unit_interval} and which we studied in Section~\ref{sec:preliminaries}. 
Hence, the $1$-Fubini rankings are enumerated by the Fubini numbers. 
Moreover, the $r$-Fubini rankings are nested, meaning that \[\mathfrak{S}_{m}=\rfr{m}{m}\subseteq \rfr{m}{m-1}\subseteq\cdots \subseteq \rfr{m}{2}\subseteq \rfr{m}{1}=\fr{m},\] where $\mathfrak{S}_m$ denotes the set of permutations on the set $[m]$.

In what follows we let $\rupf{m}{r}\subseteq\upf{m}$ denote the set of unit interval parking functions of length $m\coloneqq n+r$ in which the first $r$ values are distinct.
As with $r$-Fubini rankings, the sets $\rupf{m}{r}$ are also nested:
\[\mathfrak{S}_{m}=\rupf{m}{m}\subseteq \rupf{m}{m-1}\subseteq\cdots \subseteq \rupf{m}{2}\subseteq \rupf{m}{1}=\upf{m}.\]
Even though the sets share similar properties, such as this nesting, we now provide an example which illustrates that the sets $\rupf{n+r}{r}$ and $\rfr{n+r}{r}$ are not the same. 
\begin{example}
Note $(3,2,1,4,4,4)$ is in $\rfr{6}{4}$ but not in $\rupf{6}{4}$, since car $6$ would not park within a unit interval. Note
 $(3,2,1,4,4,5)$ is in $\rupf{6}{4}$ and not in $\rfr{6}{4}$, because because the tie at rank 4 would disallow rank 5 from appearing. 
\end{example}
We are now ready to state and prove our main result of this section.
\begin{theorem}\label{thm:r-fubini_to_r-distinct_uipf}
For $r\geq 1$ and $n\geq 0$, if $m=n+r$, then 
the sets $\rupf{m}{r} $ and  $\rfr{n}{r}$ are in bijection.
\end{theorem}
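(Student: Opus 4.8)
The plan is to show that the bijection $\phi:\fr{m}\to\upf{m}$ of Theorem~\ref{thm:bijection upf and fr} and its inverse $\psi$ restrict to a bijection between the distinguished subsets $\rfr{m}{r}\subseteq\fr{m}$ and $\rupf{m}{r}\subseteq\upf{m}$, where $m=n+r$. Since $\phi$ and $\psi$ are already known to be mutually inverse bijections on the full sets, it suffices to verify the two containments $\phi(\rfr{m}{r})\subseteq\rupf{m}{r}$ and $\psi(\rupf{m}{r})\subseteq\rfr{m}{r}$; the restricted maps are then automatically mutually inverse bijections between these subsets.

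The forward containment is the routine one. Given $\beta=(b_1,\ldots,b_m)\in\rfr{m}{r}$, the first $r$ entries $b_1,\ldots,b_r$ are distinct, so for each $i\le r$ we have $|\{1\le j\le i: b_j=b_i\}|=1$; by the definition of $\phi$ in Lemma~\ref{lem: phi into U_n}, this means $\phi$ fixes each such entry, i.e.\ $a_i=b_i$ for all $i\le r$. Hence the first $r$ entries of $\phi(\beta)$ are exactly $b_1,\ldots,b_r$, which are distinct, and since $\phi(\beta)\in\upf{m}$ by Lemma~\ref{lem: phi into U_n}, we conclude $\phi(\beta)\in\rupf{m}{r}$.

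The reverse containment is where the real work lies, and I expect it to be the main obstacle. Given $\alpha=(a_1,\ldots,a_m)\in\rupf{m}{r}$ with block structure $\alpha'=\pi_1|\cdots|\pi_\ell$, recall from the proof of Lemma~\ref{lem: phi into U_n} that a block of length $t\ge 2$ has the form $x,x,x+1,\ldots,x+t-2$ with $x=\min(\pi)$, so its minimum value $x$ occurs twice. By Theorem~\ref{thm:upf_rearrangement}(2), the entries of each block appear in $\alpha$ in this same relative order; consequently, since the positions occupied by a fixed block are increasing, the entries of that block landing among the first $r$ positions of $\alpha$ form a \emph{prefix} of the block. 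The key point is then: if two or more entries of a single block appeared among positions $1,\ldots,r$, this prefix of length at least two would contain both copies of $x$, forcing the value $x$ to repeat among $a_1,\ldots,a_r$ and contradicting the hypothesis that these are distinct. Therefore each block contributes at most one entry to the first $r$ positions, and that entry must be $\min(\pi)$. Hence $a_1,\ldots,a_r$ come from $r$ distinct blocks and each equals the minimum of its block; since distinct blocks have distinct minima, the values $\psi(\alpha)_i=\min(\pi_{j(i)})=a_i$ for $i\le r$ are distinct. As $\psi(\alpha)\in\fr{m}$ by Lemma~\ref{lem: psi into F_n}, we obtain $\psi(\alpha)\in\rfr{m}{r}$.

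Combining the two containments with the fact that $\phi$ and $\psi$ are inverse bijections on the full sets shows that their restrictions give the desired bijection $\rupf{m}{r}\leftrightarrow\rfr{m}{r}$, completing the proof. The only delicate step is the prefix argument in the reverse direction, which relies essentially on the doubled minimum of each block together with the order-preservation guaranteed by Theorem~\ref{thm:upf_rearrangement}.
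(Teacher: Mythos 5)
Your proposal is correct, and at the top level it takes the same route as the paper: restrict the bijection $\phi/\psi$ of Theorem~\ref{thm:bijection upf and fr} to the distinguished subsets. The differences are in the execution, and they favor you. The paper argues only one containment explicitly: given $\alpha\in\rupf{m}{r}$, the first $r$ cars have distinct preferences, hence all park in their preferred spots, hence each $a_i$ with $i\le r$ begins a block and is fixed by $\psi$, so $\psi(\alpha)\in\rfr{m}{r}$; the converse containment $\phi(\rfr{m}{r})\subseteq\rupf{m}{r}$, which is what makes the restricted map surjective, is left implicit. Your observation that $\phi$ literally fixes any entry that is unrepeated up to its own position supplies exactly that missing half in one line. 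For the direction the paper does prove, your mechanism is static where the paper's is dynamic: instead of re-running the parking process to see that the first $r$ cars are lucky, you use the block shape $x,x,x+1,\ldots,x+t-2$ together with the order-preservation of Theorem~\ref{thm:upf_rearrangement}(2) to conclude that a block placing two entries among the first $r$ positions would put both copies of its minimum there, contradicting distinctness; the two arguments are equivalent (a car is lucky precisely when its preference is the first occurrence of its block's minimum), but yours is more self-contained and makes the ``at most one entry per block, necessarily the minimum'' structure explicit. One small citation correction: the block shape with doubled minimum is established for arbitrary unit interval parking functions in the base case of Theorem~\ref{thm:upf_rearrangement} (and reappears as Equation~\eqref{eq:xj} in the proof of Theorem~\ref{thm:bijection upf and fr}); Lemma~\ref{lem: phi into U_n} only exhibits that shape for tuples in the image of $\phi$, so it is the wrong reference for a fact you need about a general $\alpha\in\rupf{m}{r}$.
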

\begin{proof}
Given a $\upf{m}$ in which the first $r$ values are distinct, we use the bijection in Theorem~\ref{thm:bijection upf and fr} to find a unique $\fr{m}$.
We know that each of the first $r$ cars will park in its preferred spot, so each of those values will be starting a new $\pi$ block and will thus be mapped to itself in the Fubini ranking. 
So the Fubini ranking will start with the same $r$ distinct values as the unit interval parking function. 
Thus, $|\rupf{m}{r}|=|\rfr{n}{r}|$. 
\end{proof}

Next we enumerate the elements of $\rupf{n+r}{r}$.
\begin{theorem}\label{thm:unit_interval_r_fub}
If $r\geq 1$ and $n\geq 0$, then
     $|\rupf{n+r}{r}|=\rfb{n}{r}$.
\end{theorem}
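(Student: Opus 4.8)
The plan is to combine the bijection of Theorem~\ref{thm:r-fubini_to_r-distinct_uipf} with a direct enumeration of $r$-Fubini rankings via ordered set partitions. By Theorem~\ref{thm:r-fubini_to_r-distinct_uipf} we have $|\rupf{n+r}{r}| = |\rfr{n+r}{r}|$, so it suffices to show that the number of $r$-Fubini rankings of length $n+r$ equals $\rfb{n}{r}$. All of the arithmetic then reduces to matching the defining sum of the $r$-Fubini numbers (Definition~\ref{def:rFubini}) against a block count.

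First I would record the correspondence between Fubini rankings and ordered set partitions. Given $\beta=(b_1,\ldots,b_{n+r})\in\fr{n+r}$, grouping the indices according to their common value and listing these groups in order of increasing rank produces an ordered set partition $(B_1,\ldots,B_b)$ of $[n+r]$. The defining condition of Definition~\ref{defin:Fubini_rankings}, that a value $x$ occurring $k$ times is followed by the value $x+k$, is exactly the requirement that the rank assigned to $B_j$ equals $1+\sum_{i<j}|B_i|$. Conversely, every ordered set partition $(B_1,\ldots,B_b)$ of $[n+r]$ determines a unique Fubini ranking by assigning to each index in $B_j$ the value $1+\sum_{i<j}|B_i|$, and these two operations are mutually inverse. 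Under this bijection the $r$-Fubini condition $|\{a_1,\ldots,a_r\}|=r$ translates precisely into the condition that the elements $1,2,\ldots,r$ lie in $r$ distinct blocks.

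Next I would count these ordered set partitions by their number of blocks. Fix the number of blocks to be $k+r$; since $1,\ldots,r$ occupy distinct blocks there are at least $r$ blocks, and at most $n+r$ (all singletons), so $0\le k\le n$. By Definition~\ref{def:rFubini}, the number of \emph{unordered} set partitions of $[n+r]$ into $k+r$ blocks with $1,\ldots,r$ in distinct blocks is the $r$-Stirling number $\left\{\begin{smallmatrix}n+r\\k+r\end{smallmatrix}\right\}_r$. Each such unordered partition gives rise to exactly $(k+r)!$ ordered set partitions, and reordering the blocks never affects which block an element lies in, so the $r$-distinctness constraint is preserved. Summing over $k$ yields
\[
|\rfr{n+r}{r}| = \sum_{k=0}^{n} (k+r)!\left\{\begin{matrix}n+r\\k+r\end{matrix}\right\}_r = \rfb{n}{r},
\]
and combining this with Theorem~\ref{thm:r-fubini_to_r-distinct_uipf} completes the proof.

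The main obstacle is the first step: verifying carefully that the map from Fubini rankings to ordered set partitions is a genuine bijection, and that the ``first $r$ values distinct'' condition of Definition~\ref{defin:r_Fubini_rankings} corresponds faithfully to the $r$-Stirling constraint that $1,\ldots,r$ occupy distinct blocks. Once that correspondence is pinned down, the enumeration is a routine bookkeeping of block orderings against the definitions of the $r$-Stirling and $r$-Fubini numbers.
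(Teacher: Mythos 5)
Your proof is correct, but it reaches the count along a genuinely different route from the paper's. The paper proves Theorem~\ref{thm:unit_interval_r_fub} \emph{without} invoking Theorem~\ref{thm:r-fubini_to_r-distinct_uipf}: it enumerates $\rupf{n+r}{r}$ directly on the parking-function side. Starting from the weakly increasing representative, whose block structure with $k+r$ blocks is forced by Theorem~\ref{thm:upf_rearrangement}, it uses the rearrangement characterization of that theorem to identify the elements of $\rupf{n+r}{r}$ with ordered set partitions of the \emph{positions} $[n+r]$ into $k+r$ blocks in which positions $1,\ldots,r$ lie in distinct blocks, counted by $(k+r)!\left\{\begin{smallmatrix}n+r\\k+r\end{smallmatrix}\right\}_r$, and sums over $k$. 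You instead transfer the problem to $\rfr{n+r}{r}$ via Theorem~\ref{thm:r-fubini_to_r-distinct_uipf} and enumerate $r$-Fubini rankings through the classical value-grouping correspondence with ordered set partitions (essentially the Cayley--Gross correspondence the paper already cites for $|\fr{n}|=\fb{n}$), with the first-$r$-distinct condition on values translating into the $r$-Stirling separation constraint on the elements $1,\ldots,r$. Both arguments hinge on the identical count $(k+r)!\left\{\begin{smallmatrix}n+r\\k+r\end{smallmatrix}\right\}_r$; the difference is which side of the bijection carries it. Your route buys a cleaner enumeration -- grouping equal values in a ranking is more transparent than the paper's tacit step that ``first $r$ values distinct'' corresponds to positions $1,\ldots,r$ lying in distinct blocks of a rearrangement -- but it makes Theorem~\ref{thm:unit_interval_r_fub} logically dependent on Theorem~\ref{thm:r-fubini_to_r-distinct_uipf}, whereas in the paper the two results are independent and are combined only afterwards to deduce $|\rfr{n+r}{r}|=\rfb{n}{r}$, which is precisely the statement you prove first; no circularity arises, since Theorem~\ref{thm:r-fubini_to_r-distinct_uipf} rests only on Theorem~\ref{thm:bijection upf and fr}. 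One small point to pin down in your first step: Definition~\ref{defin:Fubini_rankings} as literally stated does not force the rank $1$ to appear, but the paper's own usage (see the proof of Lemma~\ref{lem:FRsArePFs}) assumes it, and your assignment of rank $1+\sum_{i<j}|B_i|$ to $B_j$ needs it; with that reading your correspondence is exactly right.
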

\begin{proof}
    Let $\alpha=(a_1,a_2,\ldots,a_r,a_{r+1},\ldots,a_{n+r}) \in\rupf{n+r}{r}$ be weakly increasing. As $\rupf{n+r}{r}\subset\upf{n+r}$, by Theorem~\ref{thm:upf_rearrangement}
    we know the structure of $\alpha$, meaning that there exists  $r+k$, with $k\in[n]$, indices of $\alpha$ such that $a_i=i$. 
    These values always begin a block $\pi_i$. 
    By Theorem~\ref{thm:upf_rearrangement}, we know that we can construct all elements of $\rupf{n+r}{r}$ by permuting the blocks while keeping the relative order of the elements of each block. 
    The number of ways to do this, for a fixed $k\in[n]$, is given by 
    \[(k+r)!\left\{\begin{matrix}n+r\\k+r\end{matrix}\right\}_r\]
    where (as in Definition~\ref{def:rFubini}) the definition of the $r$-Stirling number ensures 
    that the first $k+r$ values appear in distinct blocks.
    The result follows by taking the sum over all possible $k\in[n]$. Therefore,
    \[|\rupf{n+r}{r}|=\sum_{k=0}^{n}(k+r)!\left\{\begin{matrix}n+r\\k+r\end{matrix}\right\}_r=\fb{n}^{r}.\qedhere\]
\end{proof}
Together Theorems~\ref{thm:r-fubini_to_r-distinct_uipf} and~\ref{thm:unit_interval_r_fub} imply the following.
\begin{corollary}
If $r\geq 1$ and $n\geq 0$, then
    $|\rfr{n+r}{r}|=\rfb{n}{r}$.
\end{corollary}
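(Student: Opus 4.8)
The plan is to obtain this statement as an immediate consequence of the two preceding theorems by chaining cardinality equalities, since the corollary is exactly the transitive combination of their conclusions. First I would invoke Theorem~\ref{thm:r-fubini_to_r-distinct_uipf}, which furnishes a bijection between the set of $r$-Fubini rankings of length $n+r$ and the set $\rupf{n+r}{r}$ of unit interval parking functions of length $n+r$ whose first $r$ entries are distinct. Because a bijection preserves cardinality, this yields the first equality $|\rfr{n+r}{r}| = |\rupf{n+r}{r}|$. The hypotheses $r\geq 1$ and $n\geq 0$ are precisely those of that theorem, so no separate boundary argument is required.

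Next I would apply Theorem~\ref{thm:unit_interval_r_fub}, which enumerates exactly this second set and states $|\rupf{n+r}{r}| = \rfb{n}{r}$. Composing the two equalities then gives
\[|\rfr{n+r}{r}| = |\rupf{n+r}{r}| = \rfb{n}{r},\]
which is the claim.

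I do not expect any genuine obstacle here: all of the combinatorial content has already been carried out in establishing the two theorems, namely the block-structure bijection of Theorem~\ref{thm:bijection upf and fr} (specialized in Theorem~\ref{thm:r-fubini_to_r-distinct_uipf} to preferences that start with $r$ distinct values, forcing the first $r$ cars to be lucky and hence to open new blocks), together with the $r$-Stirling counting argument of Theorem~\ref{thm:unit_interval_r_fub}. The corollary merely records that the two quantities these results individually relate to $|\rupf{n+r}{r}|$ must agree with each other. The only point I would take care to normalize before writing the final chain is the notational matching between the length-$(n+r)$ ranking set and the index appearing in $\rfb{n}{r}$, so that the two invoked theorems are applied with consistent parameters.
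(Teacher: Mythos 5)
Your proposal is correct and matches the paper exactly: the paper offers no separate proof, stating only that Theorems~\ref{thm:r-fubini_to_r-distinct_uipf} and~\ref{thm:unit_interval_r_fub} together imply the corollary, which is precisely your chain $|\rfr{n+r}{r}| = |\rupf{n+r}{r}| = \rfb{n}{r}$. Your remark about normalizing the length parameter is apt, since the statement of Theorem~\ref{thm:r-fubini_to_r-distinct_uipf} writes $\rfr{n}{r}$ where $\rfr{n+r}{r}$ is clearly intended.
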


\section{Future Work}\label{sec: open}
To begin, we wonder if it is possible to construct a simple proof of Part 2 of Theorem \ref{thm:bijection upf and fr}, that does not rely on Part 1,  as that would eliminate some of the complexity in our argument. Moreover, 
there is much to be discovered about $r$-Fubini rankings and we now provide some directions for further study:
\begin{enumerate}
    \item We remark that the intersection between Fubini rankings and unit interval parking functions is non-trivial. In fact, computationally the sequence for $|\fr{n}\cap \upf{n}|$  is given by \cite[\href{https://oeis.org/A080599}{A080599}]{OEIS}  whose terms for $1\leq n\leq 7$ are:\[ 1, 3, 12, 66, 450, 3690, 35280.\] Stanley notes that this sequence gives the number of intervals in the weak (Bruhat) order of $\mathfrak{S}_n$ that are Boolean algebras. A new bijective proof has now appeared \cite{boolean}.
    \item One could extend the above to the case of $r$-Fubini rankings with $r>1$. Namely, enumerate the cardinalities of the set $\rfr{n}{r}\cap \rupf{n+r}{r}$, where we fix one of the parameters while varying the other. 
    \item The study of statistics of permutations is a well-researched mathematical area. Schumacher has results related to ascents, descents, and ties for parking functions \cite{Schumacher_Descents_PF} and  it would be of interest to study statistics for unit interval parking functions and Fubini rankings, as well as their generalizations $\rupf{n+r}{r}$ and $\rfr{n+r}{r}$. 
    We hope to follow up this paper with results in this direction. 
\end{enumerate}

%%%%%%%%%%%%%%%%%%%%%%%%%%%%%%%%%%%%%%%%%%%%%%
%Bibliography.;//
%%%%%%%%%%%%%%%%%%%%%%%%%%%%%%%%%%%%%%%%%%%%%%

\bibliographystyle{plain}
\bibliography{bibliography}

\end{document}